\newtheorem{theor}{Theorem}%[section]
\newtheorem{claim}[theor]{Claim}
\theoremstyle{definition}
\newtheorem{proposition}[theor]{Proposition}%[section]
\newtheorem{lemma}[theor]{Lemma}%[section]
\newtheorem{cor}[theor]{Corollary}%[section]
\newtheorem{define}{Definition}%[section]
\newtheorem{notation}{Notation}
\newtheorem{example}{Example}%[section]
\theoremstyle{remark}
\newtheorem{rem}{Remark}%[section]
\def\thickhline{\noalign{\hrule height.8pt}}
\DeclareMathOperator{\emb}{emb}
\DeclareMathOperator{\dif}{d}
\def\oldvec{\mathaccent "017E\relax }
\DeclareMathOperator{\Or}{\mathsf{O\oldvec{r}}}
\newcommand{\im}{\text{im }}
\begin{document}
\title{Kontsevich graphs act on Nambu--Poisson brackets, \uppercase\expandafter{\romannumeral3\relax}. Uniqueness aspects}

\author{F M Schipper, M S Jagoe Brown and A V Kiselev}

\affil{Bernoulli Institute for Mathematics, Computer Science and Artificial Intelligence, University of Groningen, P.O. Box 407, 9700 AK Groningen, The Netherlands}

\email{f.m.schipper@rug.nl, a.v.kiselev@rug.nl}
\pagestyle{plain}
\begin{abstract}
Kontsevich constructed a map between `good' graph cocycles $\gamma$ and infinitesimal deformations of Poisson bivectors on affine manifolds, that is, Poisson cocycles in the second Lichnerowicz--Poisson cohomology. For the tetrahedral graph cocycle $\gamma_3$ and for the class of Nambu-determinant Poisson bivectors $P$ over $\mathbb{R}^2$, $\mathbb{R}^3$ and $\mathbb{R}^4$, we know the fact of trivialization, $\dot{P}=\llbracket P, \vec{X}^{\gamma_3}_{\text{dim}}\rrbracket$, by using dimension-dependent vector fields $\vec{X}^{\gamma_3}_{\text{dim}}$ expressed by Kontsevich (micro-)\/graphs. We establish that these trivializing vector fields $\vec{X}^{\gamma_3}_{\text{dim}}$ are unique modulo Hamiltonian vector fields $\vec{X}_{H}=\dif_P(H)= \llbracket P, H\rrbracket$, where $\dif_P$ is the Lichnerowicz--Poisson differential and where the Hamiltonians $H$ are also represented by Kontsevich (micro-)\/graphs. However, we find that the choice of Kontsevich (micro-)\/graphs to represent the aforementioned multivectors is not unique.  
\end{abstract}

\section{Introduction}
In 1977, Lichnerowicz introduced a cohomology theory for Poisson manifolds\cite{lichnerowicz1977varietes}. In this theory, the differential is given by $\dif_P=\llbracket P, \ \cdot \ \rrbracket$, where the bracket $\llbracket \ \cdot \ , \
 \cdot \ \rrbracket$ is the Schouten bracket and $P$ is a Poisson bivector. The corresponding cochain complex is given by
\begin{align}\label{full cochain complex}
    0\longrightarrow\mathbb{R}\longhookrightarrow C^{\infty}(M^d) \xrightarrow[]{\dif_P} \mathfrak{X}(M^d)\xrightarrow[]{\dif_P} \mathfrak{X}^2(M^d) \xrightarrow[]{\dif_P} \hdots \xrightarrow[]{\dif_P} \mathfrak{X}^d(M^d) \xrightarrow[]{\ \ \ } 0. \tag{$\star$}
\end{align}  
In 1996, Kontsevich related `good' graph cocycles $\gamma$ in his graph complex $GC$ to infinitesimal deformations of Poisson bivectors $\dot{P}= 
Q^{\gamma}(P)\in \mathfrak{X}^2(M^d_{\textup{aff}})$ (which belong to the kernel of the Poisson differential $d_P$) on an affine Poisson manifold $M^d_{\textup{aff}}$\cite{kontsevich1997formality}. The smallest good graph cocycle $\gamma$ is the tetrahedral graph cocycle $\gamma_3$. We investigate whether the corresponding tetrahedral flow $Q^{\gamma_3}=\Or(\gamma_3)(P)$ is trivial, i.e., whether in addition to being a cocycle, it is also a coboundary. While an immediate thought is to study the trivialization problem on the level of graphs, it is shown that there cannot exist a universal trivializing solution on the level of directed graphs  \cite{kontsevich1997formality,avk}. Instead, we use the morphism $\phi$ to pass from graphs to multivectors \cite{mollie}, and study the trivialization problem $\dot{P}=Q^{\gamma_3}(P)=\llbracket P, \vec{X}^{\gamma_3}\rrbracket$ on the level of multivectors that can be represented by graphs. Let us denote by $\mathfrak{X}^k_{\textup{gra}}(M^d_{\textup{aff}})$ the space of those $k$-vectors on
an affine manifold $M^d_{\textup{aff}}$ which are obtained from graphs. When we restrict the cochain complex \eqref{full cochain complex} to these spaces $\mathfrak{X}^k(M^d_{\textup{aff}})$, we get a subcochain complex for Poisson cohomology \cite{kontsevich1997formality}
\begin{align}\label{cochain complex}
    0\longrightarrow\mathbb{R}\longhookrightarrow C^{\infty}_{\textup{gra}}(M^d_{\textup{aff}}) \xrightarrow[]{\dif_P} \mathfrak{X}_{\textup{gra}}(M^d_{\textup{aff}})\xrightarrow[]{\dif_P} \mathfrak{X}_{\textup{gra}}^2(M^d_{\textup{aff}}) \xrightarrow[]{\dif_P} \hdots \xrightarrow[]{\dif_P} \mathfrak{X}_{\textup{gra}}^d(M^d_{\textup{aff}}) \xrightarrow[]{\ \ \ } 0. \tag{$*$}
\end{align} 
%Instead, we consider the trivialization problem on the level of multivectors obtained from graphs via the  
% These $Q^{\gamma}$ are represented by graphs, and, in fact, restricting to multivectors that can be represented by graphs gives us a subcochain complex for Poisson cohomology \cite{kontsevich1997formality},
% The smallest graph flow is $Q^{\gamma_3}$, where $\gamma_3$ is the tetrahedron. We investigate whether this flow $Q^{\gamma_3}$ is trivial, i.e., is it not just a cocycle, but also a coboundary? 
% The smallest such graph is $We study $Q^{\gamma_3}$, where the tetrahedral $\gamma_3$ is the smallest graphin Kontsevich's
 % We are working in the Kontsevich graph complex $Gra$, but it is not possible to solve the universal trivialization problem on this space \cite{avk}. Instead, we use the morhpism $\phi$ to move from graphs to multivectors, and thus require that the multivectors that we are working with can be represented by graphs in $Gra$. When we restrict to such multivectors, we get a subcochain complex for Poisson cohomology \cite{kontsevich1997formality}, 
Additionally, we restrict to the class of Nambu-determinant Poisson brackets \cite{buring2022hidden}, and we use dimension specific Kontsevich (micro-)graphs for this class, as introduced in \cite{buring2023tower}. This text is a continuation of \cite{mollie}.

 This text is structured as follows. In section \ref{uniqueness of trivializing} we introduce some notions and phrase the problem. Then we state the main results: the trivializing vector fields of the tetrahedral graph flow over $\mathbb{R}^d$ ($d\leqslant 4$) are unique modulo Hamiltonian vector fields, see section \ref{uniqueness dim 2} for $\mathbb{R}^2$, section \ref{uniqueness dim 3} for $\mathbb{R}^3$ and section \ref{uniqueness dim 4} for $\mathbb{R}^4$. In section \ref{non-uniqueness of graphs} we discuss the non-uniqueness of graphs chosen to represent specific multivectors. (All proofs presented in this text are direct calculations.\footnote{The SageMath package \textsf{gcaops} (\href{https://github.com/rburing/gcaops}{https://github.com/rburing/gcaops}) is used to convert graphs to multivectors, and to solve linear algebraic systems for coefficients of graphs. All the code used for these calculations is attached.})

\section{The trivializing vector fields modulo Hamiltonian vector fields: preliminaries}\label{uniqueness of trivializing}
We consider vector fields $\vec{Y}\in \mathfrak{X}_{\textup{gra}}(\mathbb{R}^d)$ solving the homogeneous equation $\dif_P(\vec{Y})=0$, and we show explicitly that these vector fields are expressed by a (linear combination of) Hamiltonian vector field(s) that we compute in advance. The proof structure is the same for each dimension $d=2,3,4$. 

 Recall that to solve the trivialization problem for the tetrahedral flow, we must find a vector field $\vec{X}^{\gamma_3}_{\textup{dim}}$ satisfying the nonhomogeneous linear algebraic equation 
\begin{align}\label{non hom eq}
    \dot{P} = Q^{\gamma_3}_{\text{dim}}(P)= \llbracket P, \vec{X}^{\gamma_3}_{\textup{dim}}\rrbracket.
\end{align}
As usual, solutions $\vec{Y}_{\textup{dim}}$ to the homogeneous equation,
\begin{align}\label{hom eq}
    \llbracket P, \vec{Y}_{\textup{dim}}\rrbracket=0\in \mathfrak{X}_{\textup{gra}}^2(\mathbb{R}^d),
\end{align}
 give us all the solutions to equation \eqref{non hom eq} via $\vec{X}^{\gamma_3}_{\textup{dim}}+ \vec{Y}_{\textup{dim}}$. In the following sections, we show that the vector fields $\vec{Y}_{\textup{dim}}$ solving the homogeneous system \eqref{hom eq} are Hamiltonian vector fields. 
\begin{define}
    We call $H\in C^{\infty}_{\textup{gra}}(\mathbb{R}^d)$ \textit{Hamiltonians}. Moreover, we call a vector field 
    $\vec{X}_H~\in~\mathfrak{X}_{\textup{gra}}(\mathbb{R}^d)$ \textit{Hamiltonian} if it is in the image of the Lichnerowicz--Poisson differential $\dif_P=\llbracket P,\ \cdot \ \rrbracket$, that is, $\vec{X}_{H}=\dif_P(H)=\llbracket P, H \rrbracket$, for some Hamiltonian $H$.
\end{define}
\begin{notation}
    We will denote a directed edge $(i,j)\in E(\Gamma)$ of a graph $\Gamma$, where $i,j\in V(\Gamma)$, by the shorthand notation $ij$. 
\end{notation}
%We introduce two more notions.
\begin{define}
    The set of \textit{$d$-dimensional descendants} $(\widehat{\Gamma})_d$ of a two-dimensional Kontsevich graph $\Gamma$ is the collection of all the Nambu micro-graphs obtained from $\Gamma$ by adding $d-2$ Casimir vertices at each Nambu-determinant Poisson structure and redirecting the two original outgoing edges at each Levi-Civita vertex via the Leibniz rule over all the vertices of the targeted Poisson structure(s).
\end{define}
\begin{example}
    Consider the two-dimensional Kontsevich graph encoded\footnote{Edges are issued only from the Levi-Civita vertices. The outgoing edges corresponding to each Levi-Civita vertex are separated by the semicolon ; and encoded by the label of the target vertex. As an extra example, the encoding $[1,2,3; 2,3,4]$ has two Levi-Civita vertices $1,2$, as well as two Casimir vertices $3$ and $4$, and six directed edges $11 \prec 12 \prec 13$, $22\prec23 \prec24$. See also Example \ref{embedding example}. \label{footnote_def_encoding}} by $\Gamma=[1,2;1,2]$. This is a graph on Levi-Civita vertices $1$ and $2$, with directed and ordered edges $11=(1,1)\prec(1,2)=12$, $21=(2,1)\prec(2,2)=22$.
    \begin{center}
    \begin{tikzpicture}
    \tikzset{vertex/.style = {shape=circle,draw,minimum size=1.5em}}
    \tikzset{edge/.style = {->,> = latex'}}
    %% vertices
     \draw[fill=black] (-1,0) circle (3pt);
     \draw[fill=black] (1,0) circle (3pt);
    %% vertex labels
    \node (1) at (-1,0) {};
    \node (2) at (1,0) {};
    \node at (-1,0.5) {1};
    \node at (1,0.5) {2};
    %%% edges
    \draw[thick][edge] (1) to [bend left] node[above] {\footnotesize{12}} (2)  ;
    \draw[thick][edge] (2) to [bend left] node[below] {\footnotesize{21}} (1) {};
    \path[thick] (1) edge [loop left] node[left]{\footnotesize{11}} (1);
    \path[thick] (2) edge [loop right] node[right] {\footnotesize{22}} (2);
    % \draw[thick] (-0.5,0) [out=45, in=145] to (0.5,0);
    \end{tikzpicture}
    \end{center}
    The three-dimensional Nambu micro-graph descendants of this Kontsevich graph are given by  
    \begin{center}
    \begin{tikzpicture}
    \tikzset{vertex/.style = {shape=circle,draw,minimum size=1.5em}}
    \tikzset{edge/.style = {->,> = latex'}}
    %%  1st graph
     \draw[fill=black] (-5,0) circle (3pt);
     \draw[fill=black] (-7,0) circle (3pt);
     \draw[fill=black] (-7,-0.75) circle (3pt);
     \draw[fill=black] (-5,-0.75) circle (3pt);
     \node (1) at (-7,0) {};
     \node (2) at (-5,0) {};
     \node (3) at (-7,-0.75) {};
     \node (4) at (-5,-0.75) {};
     \node at (-7,0.5) {1};
     \node at (-5,0.5) {2};
     \node at (-7,-1.25) {3};
     \node at (-5,-1.25) {4};
     \draw[thick][edge] (1) to [bend left] node[above] {} (2);
     \draw[thick][edge] (2) to [bend left] node[below] {} (1);
     \draw[thick][edge] (1) to (3) ;
     \draw[thick][edge] (2) to (4);
     \path[thick] (1) edge [loop left] node[left] {} (1);
     \path[thick] (2) edge [loop right] node[right] {} (2);
     % second graph
     \draw[fill=black] (-1,0) circle (3pt);
     \draw[fill=black] (-3,0) circle (3pt);
     \draw[fill=black] (-3,-0.75) circle (3pt);
     \draw[fill=black] (-1,-0.75) circle (3pt);
     \node (5) at (-3,0) {};
     \node (6) at (-1,0) {};
     \node (7) at (-3,-0.75) {};
     \node (8) at (-1,-0.75) {};
     \node at (-3,0.5) {1};
     \node at (-1,0.5) {2};
     \node at (-3,-1.25) {3};
     \node at (-1,-1.25) {4};
     \draw[thick][edge] (5) to (6);
     \draw[thick][edge] (6) to (7);
     \draw[thick][edge] (5) to (7);
     \draw[thick][edge] (6) to (8);
     \path[thick] (5) edge  [loop left] node {} (5);
     \path[thick] (6) edge [loop right] node {} (6);
    %% 3rd graph
    \draw[fill=black] (1,0) circle (3pt);
     \draw[fill=black] (3,0) circle (3pt);
     \draw[fill=black] (3,-0.75) circle (3pt);
     \draw[fill=black] (1,-0.75) circle (3pt);
     \node (10) at (3,0) {};
     \node (9) at (1,0) {};
     \node (12) at (3,-0.75) {};
     \node (11) at (1,-0.75) {};
     \node at (1,0.5) {1};
     \node at (3,0.5) {2};
     \node at (1,-1.25) {3};
     \node at (3,-1.25) {4};
     \draw[thick][edge] (9) to (12);
     \draw[thick][edge] (9) to (11);
     \draw[thick][edge] (10) to (9);
     \draw[thick][edge] (10) to (12);
     \path[thick] (9) edge [loop left] node {} (9);
     \path[thick] (10) edge [loop right] node {} (10);
     % 4th graph
     \draw[fill=black] (5,0) circle (3pt);
     \draw[fill=black] (7,0) circle (3pt);
     \draw[fill=black] (7,-0.75) circle (3pt);
     \draw[fill=black] (5,-0.75) circle (3pt);
     \node (14) at (7,0) {};
     \node (13) at (5,0) {};
     \node (16) at (7,-0.75) {};
     \node (15) at (5,-0.75) {};
     \node at (5,0.5) {1};
     \node at (7,0.5) {2};
     \node at (5,-1.25) {3};
     \node at (7,-1.25) {4};
     \draw[thick][edge] (13) to (15);
     \draw[thick][edge] (13) to (16);
     \draw[thick][edge] (14) to (15);
     \draw[thick][edge] (14) to (16);
     \path[thick] (13) edge [loop left] node {} (13);
     \path[thick] (14) edge [loop right] node {} (14);
    \end{tikzpicture}
    \end{center} 
    where $3$ (respectively $4$) is the Casimir vertex added to Levi-Civita vertex $1$ (respectively $2$). The corresponding encodings are given by
    \begin{align*}
       (\widehat{\Gamma})_{3D}= \big\{[1,2,3;1,2,4], \quad [1,2,3;3,2,4], \quad [1,4,3;1,2,4], \quad [1,4,3;3,2,4]\big\}.
    \end{align*}
\end{example}
\begin{define}
    The \textit{embedding} of a Kontsevich (micro-)graph $\Gamma_{\textup{dim}}$  built from $n$ Nambu-determinant Poisson structures into dimension $\textup{dim}+1$ is the graph $\Gamma_{\textup{dim+1}}=\emb(\Gamma_{\textup{dim}})$ such that to the Levi-Civita vertex of each Nambu-determinant Poisson structure, we add an extra Casimir vertex $a^{d-1}$. The original $d$ outgoing edges of each Levi-Civita vertex keep their order, and the new edge is ordered last. The embedding can often be viewed as a specific type of descendant of a graph. 
\end{define}
\begin{example}\label{embedding example}
    Consider again the two-dimensional Kontsevich graph encoded by $[1,2;1,2]$. The embedding into three dimensions is encoded by $[1,2,3;1,2,4]$, where $3,4$ are the new Casimir vertices. The edges are ordered $11\prec 12\prec  \mathbf{13}$, $21 \prec 22\prec \mathbf{24}$, where $\mathbf{13}$, $\mathbf{24}$ are the new edges.
\end{example}

%In other words, rather than working over the cochain complex of the space of multivectors $\Lambda^k(\mathbb{R^d})$ as usual, we work over a cochain complex of a subspace of the space of multivectors. These subspaces are precisely the multivectors which can be represented by Kontsevich (micro-)graphs (is this even true..?)
% \begin{align}%maybe define the cochain complex?
%     \text{the cochain complex with new(?) notation rather than }\Lambda^k(\mathbb{R}^d) ?
% \end{align}

\section{The trivializing vector fields modulo Hamiltonian vector fields: $\vec{X}^{\gamma_3}_{2D}$ }\label{uniqueness dim 2}
The fact of trivialization of the tetrahedral flow of Poisson bivectors over $\mathbb{R}^2$ has been known since 1996 by M. Kontsevich\cite{kontsevich1997formality} (see also \cite{bouisaghouane2017toy}).
\begin{lemma}\label{2D graphs}
    There are $14$ nonisomorphic Kontsevich graphs on three Levi-Civita vertices and one sink. Explicitly, these $14$ graphs are given by the following encodings.\footnote{Here, $0$ is the sink vertex, the Levi-Civita vertices are given by $1$, $2$, $3$.}  
\footnotesize
\begin{align*}
   \Gamma_1^{2D}&= [0,1;2,3;1,3]& \Gamma_2^{2D}&=[0,1;1,2;1,3] & \Gamma_3^{2D}&=[0,3;2,3;2,3]&  \Gamma_4^{2D}&=[0,3;2,3;1,3] & \Gamma_5^{2D}&=[0,2;2,3;1,3] \\
   \Gamma_6^{2D}&=[0,3;1,2;1,3] & \Gamma_7^{2D}&=[0,3;2,3;1,2] & \Gamma_8^{2D}&=[0,3;1,2;1,2] & \Gamma_9^{2D}&=[0,2;2,3;1,2] &
   \Gamma_{10}^{2D}&=[0,2;1,2;1,2] \\
\Gamma_{11}^{2D}&=[0,1;1,3;1,2] & \Gamma_{12}^{2D}&=[0,3;1,3;1,2]& \Gamma_{13}^{2D}&=[0,1;1,3;2,3]& \Gamma_{14}^{2D}&=[0,1;1,3;1,3]
\end{align*}\normalsize
\end{lemma}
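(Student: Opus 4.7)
The proof is a finite enumeration, which I would carry out in three steps: set up the labeled search space, quotient by the residual label symmetry, and eliminate a single disconnected orbit.

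First I would pin down the structural constraints. Each of the three aerial vertices $\{1,2,3\}$ carries two outgoing edges; antisymmetry of the 2D Poisson bivector forces the two targets at any vertex to be distinct, and the convention of footnote 3 (listing the smaller target first) absorbs the sign-only edge swap. Since the lemma is used in the vector-field context of Section~\ref{uniqueness dim 2}, the sink receives exactly one incoming edge; relabeling, I may take that edge to issue from vertex $1$, so $p_1 = \{0,x\}$ with $x \in \{1,2,3\}$ while $p_2, p_3 \in \binom{\{1,2,3\}}{2}$. This leaves $3 \cdot 3 \cdot 3 = 27$ labeled candidates.

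Second, the residual graph-isomorphism symmetry is $S_2 = \{e,(2\,3)\}$, permuting the labels $2$ and $3$ (vertex $1$ being canonically pinned by the sink-adjacent edge). Under $\sigma=(2\,3)$, the pair at the new vertex $k$ is $\sigma(p_{\sigma^{-1}(k)})$; fixedness amounts to $\sigma(p_1)=p_1$, forcing $x=1$, together with $p_2=\sigma(p_3)$. A direct tally yields $|\mathrm{Fix}((2\,3))|=1\cdot 3=3$, and Burnside's lemma delivers the provisional count $(27+3)/2=15$.

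Third, I would eliminate the unique disconnected orbit. A Kontsevich vector-field graph is required to have a connected underlying undirected graph, since otherwise the multivector factors off a component unreachable from the sink; the sink itself lies in vertex $1$'s component via the edge $1\to 0$. Disconnectedness thus forces no edge between $\{0,1\}$ and $\{2,3\}$, which in turn forces $x=1$ at vertex $1$ and $p_2=p_3=\{2,3\}$. This isolates the single orbit of $[0,1;2,3;2,3]$, and removing it leaves $15-1=14$ orbits.

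I expect the main obstacle to be careful bookkeeping: correctly tallying the $(2\,3)$-fixed count, and not conflating or double-counting orbits whose representatives carry an internal symmetry (such as $\Gamma_3^{2D}$, $\Gamma_{10}^{2D}$, $\Gamma_{14}^{2D}$, each stabilized by the $(2\,3)$ swap). As a sanity check, I would bucket the $14$ representatives by the value of $x$ at vertex $1$ — $5$ with $x=1$ (namely $\Gamma_1^{2D},\Gamma_2^{2D},\Gamma_{11}^{2D},\Gamma_{13}^{2D},\Gamma_{14}^{2D}$), $3$ with $x=2$ ($\Gamma_5^{2D},\Gamma_9^{2D},\Gamma_{10}^{2D}$), and $6$ with $x=3$ ($\Gamma_3^{2D},\Gamma_4^{2D},\Gamma_6^{2D},\Gamma_7^{2D},\Gamma_8^{2D},\Gamma_{12}^{2D}$) — and cross-verify against the \textsf{gcaops} enumeration referenced in footnote 1.
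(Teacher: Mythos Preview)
Your Burnside argument is correct and gives a clean by-hand derivation of the count: the normalization pinning vertex~$1$ as the unique sink-adjacent aerial vertex, the residual $S_2$ action on $\{2,3\}$, the tally $|\mathrm{Fix}((2\,3))|=3$, and the removal of the single disconnected orbit $[0,1;2,3;2,3]$ all go through exactly as you describe. The paper, by contrast, does not prove this lemma at all beyond the blanket remark in footnote~1 that such enumerations are delegated to the \textsf{gcaops} package; your argument is therefore a genuinely more elementary, machine-independent alternative, at the cost of being specific to the small parameters $(3$ aerial vertices, sink in-degree~$1)$ rather than scaling to the larger enumerations used later in the paper.

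One correction to your parenthetical sanity check: the labeled graphs fixed by $(2\,3)$ are not $\Gamma_3^{2D},\Gamma_{10}^{2D},\Gamma_{14}^{2D}$ but rather $\Gamma_2^{2D}=[0,1;1,2;1,3]$ and $\Gamma_{11}^{2D}=[0,1;1,3;1,2]$ (together with the discarded disconnected graph). Indeed, a fixed point must have $x=1$, which already rules out $\Gamma_3^{2D}$ and $\Gamma_{10}^{2D}$; and applying $(2\,3)$ to $\Gamma_{14}^{2D}=[0,1;1,3;1,3]$ yields $[0,1;1,2;1,2]\neq\Gamma_{14}^{2D}$. This slip does not affect your Burnside count, since the number $|\mathrm{Fix}((2\,3))|=3$ is computed correctly from the abstract condition $x=1$, $p_2=\sigma(p_3)$; only the identification of which listed representatives realize those fixed points is off. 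Your bucket-by-$x$ tally $5+3+6=14$ is fine as a consistency check on the total, though note that the split between $x=2$ and $x=3$ reflects only the paper's choice of representatives within each size-$2$ orbit, not an intrinsic invariant.
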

\begin{claim}[See the attached code]
    The images of the $14$ nonisomorphic Kontsevich graphs $\Gamma_1^{2D},...,\Gamma_{14}^{2D}$ under the morhpism $\phi$ from graphs to multivectors satisfy the following linear relations:
    \footnotesize\begin{align}\label{rel 2d}
\phi(\Gamma_1^{2D})&=\phi(\Gamma_5^{2D})=\phi(\Gamma_6^{2D})=-\phi(\Gamma_7^{2D})=\tfrac{1}{2}\phi(\Gamma_8^{2D})=\phi(\Gamma_{12}^{2D}) =\phi(\Gamma_{13}^{2D}), \\
    \phi(\Gamma_2^{2D})&=\phi(\Gamma_4^{2D})=-\phi(\Gamma_9^{2D})=\phi(\Gamma_{11}^{2D}),  &\phi(\Gamma_3^{2D})&=\phi(\Gamma_{10}^{2D})=\phi(\Gamma_{14}^{2D}).\quad \quad \nonumber
\end{align}
\end{claim}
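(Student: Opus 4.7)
My plan is direct verification by unfolding the morphism $\phi$ on each of the fourteen graphs and then comparing the resulting vector fields term by term, as signaled by the parenthetical remark ``See the attached code'' in the claim. Conceptually, no new ingredient beyond the explicit Kontsevich formula is required; the only difficulty is bookkeeping.

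First, I would make $\phi$ explicit in $d=2$. For the Nambu--determinant Poisson bivector on $\mathbb{R}^2$ we have $P^{ij}=\epsilon^{ij}\rho$ with $\rho\in C^\infty(\mathbb{R}^2)$. For a Kontsevich graph $\Gamma$ on three Levi-Civita vertices and one sink with encoding $[a_1,b_1;a_2,b_2;a_3,b_3]$, the vector field $\phi(\Gamma)$ sends a test function $f$ to a sum over index tuples $(i_k,j_k)_{k=1,2,3}\in\{1,2\}^6$ of products $\prod_k \partial^{(k)}(\epsilon^{i_k j_k}\rho)$ multiplied by a derivative of $f$, where at each vertex $k$ the operator $\partial^{(k)}$ is the composition of derivatives $\partial_{s_e}$ for each incoming edge $e$, with $s_e$ the index (first or second) of the outgoing edge at the source of $e$.

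Second, for each of the fourteen graphs I would expand $\phi(\Gamma_k^{2D})$ and collect it as a polynomial in $\rho$ and its partial derivatives times $\partial_j f$. Because $\epsilon^{ij}$ is antisymmetric while many edge configurations produce symmetric second derivatives $\partial_i\partial_j$, most contributions either cancel or coalesce. This is the structural source of the identifications in \eqref{rel 2d}: graphs that differ only by swapping the two outgoing edges at one Levi-Civita vertex pick up a sign from $\epsilon^{ij}\mapsto\epsilon^{ji}=-\epsilon^{ij}$; self-loops at a vertex contract a symmetric second derivative against the antisymmetric $\epsilon^{ij}$, forcing only one contraction pattern to survive; and the factor $\tfrac12$ in front of $\phi(\Gamma_8^{2D})$ reflects that one of its vertices contributes twice as many surviving monomials after the $\epsilon$-antisymmetrization. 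Third, I would compare the expansions pairwise and verify the seven-term chain, the four-term chain, and the three-term chain. Since $7+4+3=14$ exhausts the list, every graph appears exactly once and no residual relations can slip through.

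The main obstacle is purely organizational: the sixfold index sum for each graph produces many monomials that must be matched exactly across the fourteen outputs. This is precisely the role of the attached \textsf{gcaops} code, which encodes each $\Gamma_k^{2D}$, produces its image under $\phi$ via the Nambu-determinant substitution $P^{ij}=\epsilon^{ij}\rho$, and checks the claimed identities by solving a single linear system for the coefficients of all distinct monomials. A hand computation is feasible but offers no insight beyond the three antisymmetrization observations above.
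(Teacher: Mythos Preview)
Your proposal is correct and matches the paper's approach: the claim is established by direct computation of $\phi(\Gamma_k^{2D})$ for each of the fourteen graphs via the \textsf{gcaops} code, followed by linear comparison of the resulting differential polynomials in $\varrho$. Your added heuristic remarks about $\epsilon$-antisymmetrization and the origin of the factor $\tfrac{1}{2}$ are pleasant commentary but not a different method; the paper offers no argument beyond the computational verification you describe.
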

\normalsize This means that in dimension two, when restricting ourselves to the formulas which the $14$ graphs are evaluated into, we get only three linearly independent vector fields $\phi(\Gamma^{2D}_{11})$, $\phi(\Gamma^{2D}_{12})$ and $\phi(\Gamma^{2D}_{3})$: over the first two vector fields we find a unique solution $\vec{X}^{\gamma_3}_{2D}$ to equation \eqref{non hom eq}, while the third vector field solves the homogeneous system \eqref{hom eq}. 
\begin{proposition}[{\cite[Proposition 1]{bouisaghouane2017toy}}]\label{triviality 2d}
   The trivializing vector field $\vec{X}^{\gamma_3}_{2D}$ for the tetrahedral flow of Poisson bivectors $P=\varrho(x,y) \  \partial_{x}\wedge\partial_{y}$ with Cartesian coordinates, up to a normalization constant $\tfrac{1}{8}$, is given by
    \footnotesize \begin{align*}
        \vec{X}^{\gamma_3}_{2D}&= 1\cdot \phi( \Gamma^{2D}_{11}) +2 \cdot \phi(\Gamma^{2D}_{12}) \\
        &= (-2\varrho_{y}(\varrho_{xy})^2 + 2\varrho_{y}\varrho_{xx}\varrho_{yy} + (\varrho_{y})^2\varrho_{xxy} - 2\varrho_{x}\varrho_{y}\varrho_{xyy} + (\varrho_{x})^2\varrho_{yyy})\xi_x \\ 
         &{}\quad + (2\varrho_{x}(\varrho_{xy})^2 - 2\varrho_{x}\varrho_{xx}\varrho_{yy} - (\varrho_{y}^2)\varrho_{xxx} + 2\varrho_{x}\varrho_{y}\varrho_{xxy} - (\varrho_{x})^2\varrho_{xyy})\xi_y. 
     \end{align*}
\normalsize\end{proposition}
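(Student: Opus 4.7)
The proof I would write is a direct computation in Cartesian coordinates. By the linear relations \eqref{rel 2d} of the preceding Claim, the image of $\phi$ on the $14$ graphs of Lemma \ref{2D graphs} is a three-dimensional subspace of $\mathfrak{X}_{\textup{gra}}(\mathbb{R}^2)$, spanned by $\phi(\Gamma^{2D}_{11})$, $\phi(\Gamma^{2D}_{12})$ and $\phi(\Gamma^{2D}_{3})$. Any candidate trivialising vector field built from Kontsevich graphs on three Levi--Civita vertices and one sink therefore takes the form
\[
\vec{X} \;=\; c_{11}\,\phi(\Gamma^{2D}_{11}) \;+\; c_{12}\,\phi(\Gamma^{2D}_{12}) \;+\; c_{3}\,\phi(\Gamma^{2D}_{3}),
\]
and the task reduces to pinning down the coefficients from equation \eqref{non hom eq}.

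Next I would evaluate both sides of \eqref{non hom eq} explicitly on the 2D Nambu--Poisson bivector $P = \varrho(x,y)\,\partial_x \wedge \partial_y$. On the right, $Q^{\gamma_3}_{2D}(P) = \Or(\gamma_3)(P)$ is produced by the Kontsevich graph formula for the tetrahedron acting on four copies of $P$, yielding a bivector whose components are polynomials in the partial derivatives of $\varrho$ up to order three. On the left, I would compute $\phi(\Gamma^{2D}_i)$ for $i \in \{3,11,12\}$ by applying the Kontsevich edge rule vertex-by-vertex, and then assemble $\llbracket P, \vec{X}\rrbracket$ via the graded Leibniz rule. Once collected, both sides become $\mathbb{R}$-linear combinations of the same finite set of differential monomials in $\varrho$, and equating coefficients produces a sparse linear algebraic system for $(c_{11}, c_{12}, c_{3})$. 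The expected outcome is $c_{11} = 1$, $c_{12} = 2$ after absorbing the $\tfrac{1}{8}$-normalisation, while $c_{3}$ stays free: the direction $\phi(\Gamma^{2D}_{3})$ ought to be $\dif_P$-closed, a fact to be confirmed in the subsequent uniqueness analysis. Substituting back and reorganising yields the explicit two-component formula in $\xi_x,\xi_y$ stated in the proposition.

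The hard part is purely combinatorial bookkeeping: $Q^{\gamma_3}_{2D}(P)$ expands into many monomials in second and third derivatives of $\varrho$, and the Schouten bracket $\llbracket P, \vec{X}\rrbracket$ is of comparable size once fully Leibniz-expanded, so a by-hand match is impractical and error-prone. This is precisely where the SageMath package \textsf{gcaops} advertised in the introduction does the heavy lifting: it converts each Kontsevich graph $\Gamma^{2D}_i$ to its multivector image $\phi(\Gamma^{2D}_i)$, expands the Schouten bracket against $P = \varrho\,\partial_x \wedge \partial_y$, and solves the resulting linear system for the coefficients. The conceptual content of the proof -- the reduction of the search to the three-dimensional ansatz via \eqref{rel 2d} and the identification of one cocycle direction among the spanning vector fields -- is light; the equality itself is then a machine verification, consistent with the paper's footnote that all its proofs are direct calculations.
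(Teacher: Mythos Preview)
Your proposal is correct and mirrors the paper's treatment. The paper does not supply a separate proof environment for this proposition; it is quoted from \cite{bouisaghouane2017toy}, and the sentence immediately preceding it already records the outcome of the very computation you outline: after the linear relations \eqref{rel 2d} collapse the span to $\phi(\Gamma^{2D}_{11})$, $\phi(\Gamma^{2D}_{12})$, $\phi(\Gamma^{2D}_{3})$, one solves \eqref{non hom eq} over this basis via \textsf{gcaops}, obtaining the unique particular solution on the first two vectors while the third spans the homogeneous kernel. Your write-up is a faithful expansion of exactly that, so nothing further is needed.
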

\begin{proposition}\label{hom eq 2d}
    On $\mathbb{R}^2$, there is a unique vector field represented by Kontsevich graphs (modulo nonzero constant multiples) $\vec{Y}^{2D}$ such that $\llbracket P, \vec{Y}^{2D}\rrbracket=0$. 
\end{proposition}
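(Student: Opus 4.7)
The plan is to reduce Proposition \ref{hom eq 2d} to a finite-dimensional linear-algebra problem of the same flavour as the nonhomogeneous one. Any candidate $\vec{Y}^{2D}$ that is ``represented by Kontsevich graphs'' in the sense relevant to the trivialization problem must be built from three Levi-Civita vertices (copies of the Nambu-determinant Poisson bivector $P$) and one sink, matching the graph type of $\vec{X}^{\gamma_3}_{2D}$. By Lemma \ref{2D graphs} there are exactly $14$ such graphs up to isomorphism, and by relations \eqref{rel 2d} their images under $\phi$ span only a $3$-dimensional subspace of $\mathfrak{X}_{\text{gra}}(\mathbb{R}^2)$, with basis $\phi(\Gamma^{2D}_{11})$, $\phi(\Gamma^{2D}_{12})$, $\phi(\Gamma^{2D}_{3})$. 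Consequently the search for $\vec{Y}^{2D}$ is the same as computing the kernel of $\llbracket P,\,\cdot\,\rrbracket$ restricted to this $3$-dimensional subspace.

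First, I would parameterize the ansatz
\[
\vec{Y}^{2D} = c_1\,\phi(\Gamma^{2D}_{11}) + c_2\,\phi(\Gamma^{2D}_{12}) + c_3\,\phi(\Gamma^{2D}_{3}),
\]
and expand $\llbracket P, \vec{Y}^{2D}\rrbracket$ as a bivector on $\mathbb{R}^2$ after substituting $P=\varrho(x,y)\,\partial_x\wedge\partial_y$ with $\varrho$ an arbitrary smooth function. Gathering like monomials in the jets of $\varrho$ converts the condition $\llbracket P, \vec{Y}^{2D}\rrbracket=0$ into a finite homogeneous linear system in $(c_1,c_2,c_3)$.

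Next, I would verify directly by the same symbolic expansion that $\llbracket P, \phi(\Gamma^{2D}_{3})\rrbracket = 0$, so that $\phi(\Gamma^{2D}_{3})$ already lies in the kernel. Combined with the uniqueness half of Proposition \ref{triviality 2d}, which is equivalent to the injectivity of $\llbracket P,\,\cdot\,\rrbracket$ on the $2$-dimensional subspace $\text{span}\bigl(\phi(\Gamma^{2D}_{11}),\, \phi(\Gamma^{2D}_{12})\bigr)$, this forces $c_1 = c_2 = 0$, and so the kernel is exactly the one-dimensional line through $\phi(\Gamma^{2D}_{3})$, which is the claim of Proposition \ref{hom eq 2d}.

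The main obstacle is purely computational: the bivector $\llbracket P, \vec{Y}^{2D}\rrbracket$ is a polynomial of fairly high jet-order in $\varrho$, and each monomial coefficient must be matched against zero. This is tedious but entirely routine, and it is precisely the kind of task the attached \textsf{gcaops} code is designed for; conceptually, the structural work is already done by the relations \eqref{rel 2d}, the identity $\llbracket P, \phi(\Gamma^{2D}_{3})\rrbracket = 0$, and the already-established uniqueness in Proposition \ref{triviality 2d}.
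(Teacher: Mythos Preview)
Your proposal is correct and follows essentially the same line as the paper's proof. Both reduce the problem to the $3$-dimensional span $\langle\phi(\Gamma^{2D}_{11}),\phi(\Gamma^{2D}_{12}),\phi(\Gamma^{2D}_{3})\rangle$ via Lemma~\ref{2D graphs} and relations~\eqref{rel 2d}, then compute the kernel of $\llbracket P,\,\cdot\,\rrbracket$ there by symbolic expansion in the jets of~$\varrho$; the paper simply defers this to the attached code and records the outcome $\vec{Y}^{2D}=\phi(\Gamma^{2D}_{3})$. The only organisational difference is that you reuse the uniqueness of the inhomogeneous solution over $\langle\phi(\Gamma^{2D}_{11}),\phi(\Gamma^{2D}_{12})\rangle$---stated in the text preceding Proposition~\ref{triviality 2d} rather than in the proposition itself---to avoid recomputing the full kernel, whereas the paper's code does the kernel computation directly; the underlying linear-algebraic content is the same.
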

\begin{proof}
See the attached code for the computation yielding precisely one vector field solving equation \eqref{hom eq}. Explicitly, the vector field is given by
\footnotesize\begin{align*}
    \phi(\Gamma_3^{2D})=(\varrho\varrho_{yy}\varrho_{xxy} - 2\varrho\varrho_{xy}\varrho_{xyy} + \varrho\varrho_{xx}\varrho_{yyy})\xi_x + (-\varrho\varrho_{yy}\varrho_{xxx} + 2\varrho\varrho_{xy}\varrho_{xxy} - \varrho\varrho_{xx}\varrho_{xyy})\xi_y.
\end{align*}
\normalsize Another direct computation yields that $\dif_P(\phi(\Gamma_3^{2D}))=\llbracket P, \phi(\Gamma_3^{2D})\rrbracket=0$.
\end{proof}
Let us examine the degree of freedom coming from $\phi(\Gamma_{3}^{2D})$. Consider the Hamiltonians we can create from Kontsevich graphs on $\mathbb{R}^2$. Since the vector fields at hand contain three copies of the Poisson structure as vertices, and the Poisson--Lichnerowicz differential $\dif_P=\llbracket P,\ \cdot \ \rrbracket$ adds another Poisson structure, we conclude that our Hamiltonian(s) must contain two Poisson structures. 
\begin{lemma}\label{2d ham}
    There is only one way to create a Kontsevich graph on two Levi-Civita vertices $1$, $2$ and no sink. The encoding for this Hamiltonian graph is given by $\Gamma^{2D}_{H_1}= [1,2;1,2]$.
\end{lemma}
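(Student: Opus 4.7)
The plan is a direct enumeration of Kontsevich graph encodings on two Levi-Civita vertices $1,2$ with no sink. In dimension two, each Levi-Civita vertex carries an ordered pair of outgoing edges, one for each of the two indices of $P^{ij}$, and with no sink present each edge target lies in $\{1,2\}$. This gives $4$ a priori ordered target pairs per vertex, hence $16$ raw encodings in total before any admissibility or isomorphism reduction.

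The next step is to eliminate those encodings in which some vertex has both of its outgoing edges pointing to the same target. In such a configuration, the two indices $i,j$ of the antisymmetric bivector $P^{ij}$ at the source vertex contract with the symmetric operator $\partial_i\partial_j$ applied to the single term sitting at the common target vertex, so the graph evaluates to zero and is discarded from the enumeration. The surviving ordered target pair at each of the two vertices must therefore be a permutation of $\{1,2\}$, namely one tadpole plus one edge to the other Levi-Civita vertex.

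Adopting the convention already in force in Lemma~\ref{2D graphs} that the outgoing edges at each vertex are listed in nondecreasing order of target, each vertex's block is forced to be $(1,2)$. The only admissible encoding is therefore $[1,2;1,2]$, and it is manifestly fixed (after re-sorting) under the only nontrivial vertex relabeling $1\leftrightarrow 2$, so it constitutes exactly one isomorphism class. The only real point of care is pinning down the encoding convention (edge order at a vertex, plus quotient by vertex relabeling); once that is fixed, the claim reduces to a short finite inspection, and no deeper obstacle arises.
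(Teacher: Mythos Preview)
Your argument is correct. The paper states Lemma~\ref{2d ham} without proof, treating the claim as an immediate enumeration; your direct listing of the $16$ raw encodings, reduction by the antisymmetry of the ordered edge pair at each Levi-Civita vertex (which kills any block with a repeated target), the nondecreasing ordering convention, and the observation that the surviving encoding $[1,2;1,2]$ is fixed under the swap $1\leftrightarrow 2$ is exactly the finite check the paper leaves implicit.
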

\normalsize 
\begin{notation}
    Put $H^{\textup{dim}}_i=\phi(\Gamma^{\textup{dim}}_{H_i})$; the same notation is used for dimensions three and four.
\end{notation}
%We now state the first theorem.
\begin{theor}
      On $\mathbb{R}^2$, let $P=\varrho(x,y) \ \partial_x\wedge\partial_y$ be a \textup{(}possibly degenerate\textup{)} Poisson bivector. Consider the complex \eqref{cochain complex} restricted to Hamiltonians on $2$ copies of $P$, vector fields on $3$ copies of $P$ and bivectors on $4$ copies of $P$. We establish that the corresponding homogeneous part of the first Poisson-Lichnerowicz cohomology $H^1_{\textup{gra}}(\mathbb{R}^2)$ is trivial.  
\end{theor}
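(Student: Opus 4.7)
The plan is to reduce the statement to a one-dimensional linear-algebra check. By Proposition \ref{hom eq 2d}, the solution space of the homogeneous equation $\llbracket P,\vec{Y}\rrbracket=0$ among vector fields on three copies of $P$ built from Kontsevich graphs is one-dimensional, spanned by $\phi(\Gamma_3^{2D})$. By Lemma \ref{2d ham}, the space of Hamiltonians on two copies of $P$ is also one-dimensional, spanned by $H_1^{2D}=\phi(\Gamma_{H_1}^{2D})$ with $\Gamma_{H_1}^{2D}=[1,2;1,2]$. Consequently, to prove that the restricted cohomology $H^1_{\textup{gra}}(\mathbb{R}^2)$ vanishes, it is enough to verify that the single basis Hamiltonian vector field $\dif_P(H_1^{2D})=\llbracket P, H_1^{2D}\rrbracket$ is a nonzero scalar multiple of $\phi(\Gamma_3^{2D})$. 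If so, then the one-dimensional image coincides with the one-dimensional kernel, and every cocycle at this level is a coboundary.

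To carry this out, I would first evaluate $H_1^{2D}=\phi([1,2;1,2])$ explicitly as a differential polynomial in $\varrho$ via the morphism $\phi$ with $P=\varrho\,\partial_x\wedge \partial_y$, producing a function quadratic in $\varrho$ and its partial derivatives. Next, I would compute $\llbracket P, H_1^{2D}\rrbracket$ by the Leibniz rule for the Schouten bracket, obtaining a vector field of the expected weight (three copies of $\varrho$, three total derivatives). Finally, I would compare this vector field term by term with the explicit formula for $\phi(\Gamma_3^{2D})$ recorded in the proof of Proposition \ref{hom eq 2d} and read off the nonzero proportionality constant. Consistent with the paper's practice, the concrete matching is naturally delegated to the attached \textsf{gcaops} code.

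The main obstacle, and the only real content beyond bookkeeping, is precisely this non-vanishing: a priori $\llbracket P, H_1^{2D}\rrbracket$ could be zero (in which case $H_1^{2D}$ would be a graph-Casimir and a genuine cohomology class would survive in the restricted complex). Once non-vanishing is confirmed, the one-dimensionality of both the kernel and the image forces equality, and the triviality of the restricted $H^1_{\textup{gra}}(\mathbb{R}^2)$ follows immediately.
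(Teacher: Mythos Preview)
Your proposal is correct and follows essentially the same approach as the paper: reduce to the one-dimensional check that $\dif_P(H_1^{2D})$ is a nonzero multiple of $\phi(\Gamma_3^{2D})$, invoking Proposition~\ref{hom eq 2d} and Lemma~\ref{2d ham} for the one-dimensionality of kernel and image. The paper's direct computation yields the explicit constant $2\cdot\phi(\Gamma_3^{2D})=\dif_P(H_1^{2D})$, confirming the non-vanishing you identify as the key point.
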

\begin{proof}[Proof (see the attached code).]
    We write the vector field $\vec{Y}^{2D}$ of Proposition \ref{hom eq 2d} in terms of the Hamiltonian vector field $\dif_P(H_1^{2D})$. By a direct calculation, we establish that
    \footnotesize\begin{align*}
        2\cdot \vec{Y}^{2D}&= 2 \cdot \phi(\Gamma_3^{2D}) =\dif_P(H_1^{2D}), 
    \end{align*}
    \normalsize that is, the degree of freedom is provided by the Hamiltonian shift. It follows immediately that the corresponding homogeneous part of $H_{\text{gra}}^1(\mathbb{R}^2)=\ker \dif_P/\im \dif_P$ is trivial. %is this H^1 really over R^2 or over something else?
\end{proof}
\begin{cor}
    The trivializing vector field $\vec{X}^{\gamma_3}_{2D}$ of Proposition \ref{triviality 2d} is unique modulo Hamiltonian vector fields. 
\end{cor}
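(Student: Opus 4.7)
The plan is to assemble the corollary directly from the two preceding results, with no further graph computation needed. First I would recall the standard observation, already stated at the beginning of Section \ref{uniqueness of trivializing}, that any two solutions of the nonhomogeneous equation \eqref{non hom eq} differ by a solution of the homogeneous equation \eqref{hom eq}. Thus if $\vec{X}'$ is any other trivializing vector field in $\mathfrak{X}_{\textup{gra}}(\mathbb{R}^2)$ built from three copies of $P$, then $\vec{X}' - \vec{X}^{\gamma_3}_{2D}$ lies in $\ker \dif_P$ restricted to that same space.

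Next I would invoke Proposition \ref{hom eq 2d}: this restricted kernel is one-dimensional, spanned by $\phi(\Gamma_3^{2D})$, so there exists a constant $c\in\mathbb{R}$ with
\begin{equation*}
\vec{X}' - \vec{X}^{\gamma_3}_{2D} = c\cdot \phi(\Gamma_3^{2D}).
\end{equation*}
Finally, the theorem just proved gives $2\cdot \phi(\Gamma_3^{2D}) = \dif_P(H_1^{2D})$. By $\mathbb{R}$-linearity of $\dif_P$ it follows that $c\cdot \phi(\Gamma_3^{2D}) = \dif_P\bigl(\tfrac{c}{2}H_1^{2D}\bigr)$, which is by definition a Hamiltonian vector field. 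Hence $\vec{X}'$ and $\vec{X}^{\gamma_3}_{2D}$ differ by an element of $\im \dif_P$, which is exactly the claimed uniqueness modulo Hamiltonian vector fields.

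There is essentially no obstacle, only bookkeeping: one must check that the grading conventions match, i.e.\ that the kernel described in Proposition \ref{hom eq 2d} (vector fields on three copies of $P$) and the image of $\dif_P$ from the theorem (coming from Hamiltonians on two copies of $P$) are the same subspaces used in the theorem's statement on the complex \eqref{cochain complex}. Once this alignment is confirmed, the corollary is an immediate logical consequence of triviality of the homogeneous part of $H^1_{\textup{gra}}(\mathbb{R}^2)$.
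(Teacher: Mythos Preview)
Your proposal is correct and matches the paper's approach exactly: the corollary is stated without proof in the paper, as an immediate consequence of the preceding theorem, and the argument you spell out---difference of two solutions lies in $\ker\dif_P$, which by Proposition~\ref{hom eq 2d} is spanned by $\phi(\Gamma_3^{2D})$, which by the theorem equals $\tfrac{1}{2}\dif_P(H_1^{2D})$---is precisely the intended reasoning.
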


\section{The trivializing vector fields modulo Hamiltonian vector fields: $\vec{X}^{\gamma_3}_{3D}$}\label{uniqueness dim 3}
In dimension three, triviality of the tetrahedral graph cocycle was established in \cite{buring2022hidden, buring2023tower}. Interestingly, we can also find a trivializing vector field over just the descendants $(\widehat{\Gamma}^{2D}_{11})_{3D}$ and $(\widehat{\Gamma}^{2D}_{12})_{3D}$. 
\begin{lemma}[{\cite[Lemma 3]{mollie}}]
    The set $(\widehat{\Gamma}^{2D}_{11})_{3D} \cup (\widehat{\Gamma}^{2D}_{12})_{3D}$
    contains 41 non-isomorphic Nambu micro-graphs.
\end{lemma}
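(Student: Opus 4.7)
The plan is to directly enumerate both families and quotient by Nambu micro-graph isomorphism. From each 2D parent $\Gamma^{2D}_{11}=[0,1;1,3;1,2]$ and $\Gamma^{2D}_{12}=[0,3;1,3;1,2]$, I would adjoin Casimir vertices $4,5,6$, attached to Levi-Civita vertices $1,2,3$ respectively, together with the corresponding new outgoing edges $14$, $25$, $36$. Then, for every outgoing edge at a Levi-Civita vertex $i$ whose target is another Levi-Civita vertex $j$, I would independently choose whether to keep it as $ij$ or redirect it to $j+3$. Sink edges are fixed, and the self-loop $11$ of $\Gamma^{2D}_{11}$ is likewise fixed, because its only redirection $14$ would duplicate the new Casimir edge at vertex $1$. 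This yields $2^{4}=16$ labeled candidates descending from $\Gamma^{2D}_{11}$ and $2^{5}=32$ from $\Gamma^{2D}_{12}$.

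Next, I would classify these candidates up to isomorphism. A Nambu micro-graph isomorphism must fix the sink, preserve vertex types (out-degree $3$ at Levi-Civita vertices versus $0$ at Casimir vertices), and respect the attachment $i\leftrightarrow i+3$ enforced by the new Casimir edges. It is therefore determined by its action on $\{1,2,3\}$ and must itself be a symmetry of the underlying 2D parent. The parent $\Gamma^{2D}_{12}$ has trivial symmetry group, since the sink edge distinguishes vertex $1$ and the remaining edge targets distinguish $2$ from $3$, whereas $\Gamma^{2D}_{11}$ admits the transposition $(2\,3)$, lifted to $(2\,3)(5\,6)$ on the descendant level. Burnside orbit counting, or equivalently the canonicalization routine in the attached \textsf{gcaops} code, produces the number of isomorphism classes within each family.

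Finally, I would verify that the two families are disjoint: every descendant of $\Gamma^{2D}_{11}$ inherits the self-loop at vertex $1$, while no descendant of $\Gamma^{2D}_{12}$ contains a self-loop, since each allowed redirection sends an edge $ij$ with $i\ne j$ either to $j$ or to the Casimir $j+3\ne i$, never back to $i$. Self-loops being an isomorphism invariant, inclusion--exclusion reduces the cardinality of the union to the sum of the two class counts, and the \textsf{gcaops} enumeration then confirms this total equals $41$. The main obstacle is not conceptual but combinatorial bookkeeping: checking isomorphism by hand among the $48$ labeled $7$-vertex graphs under the coupled Levi-Civita/Casimir permutation is tedious and error-prone, so the automated canonicalization is the essential ingredient of the proof.
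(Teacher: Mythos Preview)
Your strategy—enumerate the labeled descendants of each 2D parent and then quotient by isomorphism via the \textsf{gcaops} canonicalization—is precisely the computational route the paper relies on (the lemma is quoted from \cite{mollie}, and the paper defers all such enumerations to the attached code).

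One caveat about your supporting hand analysis, however: carried through, your Burnside count on the $16$ labeled descendants of $\Gamma^{2D}_{11}$ under the order-two symmetry $(2\,3)(5\,6)$ gives $(16+4)/2=10$ classes, and together with the $32$ descendants of $\Gamma^{2D}_{12}$ that you argue are pairwise non-isomorphic, the disjoint-union total comes to $42$, not $41$. So at least one of the structural claims—most plausibly the assertion that every Nambu micro-graph isomorphism must respect the Levi-Civita/Casimir attachment $i\leftrightarrow i{+}3$—is slightly too restrictive compared to the isomorphism notion actually implemented, and the canonicalization routine is silently absorbing one further identification that your narrative does not capture. Since you correctly identify the automated step as the essential ingredient, the proof still goes through; just be aware that the by-hand scaffolding does not independently reach $41$.
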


\begin{proposition}\label{triviality 3d}
    For the tetrahedral flow of Poisson bivectors over $\mathbb{R}^3$ , the trivializing vector field $\vec{X}^{\gamma_3}_{3D}$, restricted to vector fields corresponding to the descendants $(\widehat{\Gamma}^{2D}_{11})_{3D}\cup(\widehat{\Gamma}^{2D}_{12})_{3D}$ of the graphs $\Gamma_{11}^{2D}, \Gamma_{12}^{2D}$ of Lemma \ref{2D graphs}, is given by 
    \footnotesize\begin{align*}
        \vec{X}^{\gamma_3}_{3D}&=\ 8\cdot \phi(\Gamma^{3D}_1)+24\cdot\phi(\Gamma^{3D}_4)+8\cdot\phi(\Gamma^{3D}_{7})+24\cdot\phi(\Gamma^{3D}_8)+12\cdot\phi(\Gamma^{3D}_{16})+16\cdot\phi(\Gamma^{3D}_{17})+16\cdot\phi(\Gamma^{3D}_{25})\\
        &+12\cdot\phi(\Gamma^{3D}_{26})+16\cdot\phi(\Gamma^{3D}_{29})+24\cdot\phi(\Gamma^{3D}_{33}).\nonumber
    \end{align*}
\normalsize \end{proposition}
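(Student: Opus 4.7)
The plan is to solve equation \eqref{non hom eq} over $\mathbb{R}^3$ by linear algebra on graph coefficients, exactly mirroring the strategy used in dimension two. By the preceding lemma, the space of vector fields spanned by the descendants $(\widehat{\Gamma}^{2D}_{11})_{3D}\cup(\widehat{\Gamma}^{2D}_{12})_{3D}$ is $41$-dimensional at the level of Nambu micro-graphs; on the level of multivectors after applying $\phi$, some of these may coincide or satisfy linear relations (as happened in the $2D$ case with the $14$ graphs collapsing to three independent vectors), so the first step is to enumerate the descendants $\Gamma^{3D}_1, \ldots, \Gamma^{3D}_{41}$ and form the ansatz
\[
   \vec{X} \;=\; \sum_{i=1}^{41} c_i \,\phi(\Gamma^{3D}_i),
\]
with $c_i\in\mathbb{Q}$ to be determined.

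Next, I would compute both sides of \eqref{non hom eq} independently. The left-hand side $Q^{\gamma_3}_{3D}(P) = \phi(\Or(\gamma_3))(P)$ is a fixed bivector on $\mathbb{R}^3$, obtained by orienting the tetrahedral cocycle $\gamma_3$ in all admissible ways and substituting the Nambu-determinant Poisson structure $P=\varrho\,\partial_x\wedge\partial_y\wedge\partial_z/\ldots$ at every internal vertex. The right-hand side $\llbracket P,\vec{X}\rrbracket$ is, by bilinearity of the Schouten bracket, a linear function of the unknowns $c_i$, which one assembles graph by graph using the Leibniz rule across Levi-Civita and Casimir vertices. Equating the two sides as polynomial expressions in the partial derivatives of $\varrho$ (treated as independent formal variables) produces an inhomogeneous linear system in the $c_i$.

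Finally, I would solve this system using the \textsf{gcaops} package, and verify that the particular solution
\[
   (c_1,c_4,c_7,c_8,c_{16},c_{17},c_{25},c_{26},c_{29},c_{33}) \;=\; (8,24,8,24,12,16,16,12,16,24),
\]
with all other $c_i=0$, indeed satisfies the system. Existence of at least one solution is guaranteed by the trivialization result of \cite{buring2023tower}, so the computation is a matter of confirming consistency of the system and exhibiting one explicit preimage; the freedom in the kernel will be studied separately in the forthcoming analysis of $\vec{Y}_{3D}$ modulo Hamiltonians.

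The main obstacle is purely combinatorial and algebraic bulk: expanding $\llbracket P, \phi(\Gamma^{3D}_i)\rrbracket$ for each of the $41$ descendants, then reducing the resulting trivectors modulo the identities satisfied by the Nambu-determinant bracket on $\mathbb{R}^3$ (in particular, the antisymmetrization identities coming from $\varepsilon^{ijk}$ and the fact that repeated indices vanish), to bring everything into a canonical basis of monomials in $\partial^{\alpha}\varrho$. This reduction step is what makes the solution non-obvious by hand, but is carried out automatically by \textsf{gcaops} as indicated in the attached code.
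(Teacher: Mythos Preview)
Your approach is exactly that of the paper, whose proof consists of a pointer to \cite{mollie} and the attached \textsf{gcaops} code: build the $41$-term ansatz over the descendants, expand $\llbracket P,\vec{X}\rrbracket$ and $Q^{\gamma_3}_{3D}(P)$ as differential polynomials, and solve the resulting linear system for the coefficients. Two small slips to fix in your write-up: $\llbracket P,\vec{X}\rrbracket$ is a \emph{bivector}, not a trivector (the Schouten bracket of a $2$-vector and a $1$-vector has degree $2+1-1=2$), and on $\mathbb{R}^3$ the Nambu--determinant structure carries a Casimir $a$ in addition to $\varrho$, so the canonical basis of monomials must track partial derivatives of both $\varrho$ and $a$.
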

\begin{proof}
    See \cite{mollie} and the attached code.
\end{proof}
 The corresponding encodings\footnote{Here, $0$ is the sink, we have Levi-Civita vertices $1$, $2$ and $3$ with the respective Casimir vertices $4$, $5$ and $6$.} of the $1$-vector graphs appearing in $\vec{X}^{\gamma_3}_{3D}$ are these:
\footnotesize \begin{align*} %%double check!
    \Gamma_1^{3D}&=[0, 1, 4; 1, 3, 5; 1, 2, 6]& \Gamma_4^{3D}&=[0, 1, 4; 1, 6, 5; 4, 2, 6] & \Gamma_7^{3D}&=[0, 1, 4; 4, 3, 5; 4, 2, 6]& \Gamma_8^{3D}&=[0, 1, 4; 4, 6, 5; 4, 2, 6]\\
    \Gamma_{16}^{3D}&=[0, 1, 4; 4, 6, 5; 4, 5, 6] & \Gamma_{17}^{3D}&=[0, 2, 4; 1, 3, 5; 1, 2, 6] &\Gamma_{25}^{3D}&=[0, 2, 4; 1, 3, 5; 1, 5, 6]& \Gamma_{26}^{3D}&=[0, 2, 4; 1, 6, 5; 1, 5, 6] \\
     \Gamma_{29}^{3D}&=[0, 2, 4; 4, 3, 5; 1, 5, 6] &  \Gamma_{33}^{3D}&=[0, 5, 4; 1, 3, 5; 1, 2, 6].  
\end{align*}
\normalsize\begin{proposition}\label{hom eq 3d}
 There are three linearly independent vector fields $\vec{Y}_{1}^{3D}$, $\vec{Y}_2^{3D}$ and $\vec{Y}_3^{3D}$ that span the solution space of $\llbracket P, \vec{X}^{3D}\rrbracket=0$ when restricting to solution over linear combinations of the descendants $(\widehat{\Gamma}_{11}^{2D})_{3D}\cup(\widehat{\Gamma}_{12}^{2D})_{3D}$.  
 \end{proposition}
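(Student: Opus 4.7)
The plan mirrors the dimension-two strategy of Proposition \ref{hom eq 2d}: parameterize the candidate vector field with undetermined coefficients, apply $\dif_P$, and solve the resulting homogeneous linear system. The only novelty is that the kernel is expected to come out three-dimensional instead of one-dimensional. Concretely, I would introduce formal unknowns $c_1,\ldots,c_{41}$ and write
\[
\vec{X}^{3D} \;=\; \sum_{i=1}^{41} c_i \, \phi(\Gamma_i^{3D}),
\]
where $\Gamma_1^{3D},\ldots,\Gamma_{41}^{3D}$ enumerate the non-isomorphic Nambu micro-graphs in $(\widehat{\Gamma}^{2D}_{11})_{3D}\cup(\widehat{\Gamma}^{2D}_{12})_{3D}$.

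Applying $\dif_P = \llbracket P,\, \cdot\, \rrbracket$ to this expression and substituting the explicit Nambu-determinant Poisson structure $P$ on $\mathbb{R}^3$ produces a bivector in $\mathfrak{X}^2_{\textup{gra}}(\mathbb{R}^3)$ which, via the Leibniz rule, expands as a linear combination of differential monomials in $\varrho$, the Casimir $a$, and their partial derivatives, tensored with wedges $\xi_i\wedge\xi_j$. Each such monomial's coefficient is a linear form in the $c_i$. I would assemble the resulting sparse coefficient matrix, row-reduce it in \textsf{gcaops}, and compute its nullspace; the Proposition's claim then reduces to verifying that the nullity equals three, after which I extract explicit representatives $\vec{Y}^{3D}_1,\vec{Y}^{3D}_2,\vec{Y}^{3D}_3$ from three basis vectors of that nullspace.

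The main obstacle is separating the genuine geometric degrees of freedom from the purely combinatorial redundancy inherent in the morphism $\phi$: as already observed in dimension two, where $14$ graphs collapsed onto only three independent vector fields via the relations in \eqref{rel 2d}, the map $\phi$ has a nontrivial kernel on the collection of $41$ descendants, so the raw nullspace in $(c_i)$-coordinates will mix tuples that lie in $\ker\phi$ (and therefore contribute nothing at the vector-field level) with tuples whose $\phi$-image is a genuinely nonzero cocycle. To state the dimension correctly I would first extract a maximal $\phi$-linearly-independent subfamily of the $41$ descendants, re-pose the homogeneous system in those reduced coordinates, and solve; alternatively, one quotients the raw nullspace by $\ker\phi$ at the end. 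Either way, linear independence of the three representatives $\vec{Y}^{3D}_1,\vec{Y}^{3D}_2,\vec{Y}^{3D}_3$ would be certified by expanding each as a vector field on $\mathbb{R}^3$ in the monomial basis of $\mathfrak{X}_{\textup{gra}}(\mathbb{R}^3)$ and verifying that the resulting coefficient matrix has rank exactly three.
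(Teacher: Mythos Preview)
Your proposal is correct and matches the paper's approach: the paper's proof is precisely the direct computation you describe, deferred to the attached \textsf{gcaops} code, with the three resulting vector fields listed explicitly. Your additional care about separating $\ker\phi$ from the genuine cocycle freedom is well-placed but in practice is absorbed automatically once the $41$ graphs are evaluated to vector fields before assembling the linear system---the paper's code works in $\mathfrak{X}_{\textup{gra}}(\mathbb{R}^3)$ rather than in graph-coefficient space, so the redundancy never enters.
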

%There are three linearly independent vector fields $\vec{Y}_{1}^{3D}$, $\vec{Y}_2^{3D}$ and $\vec{Y}_3^{3D}$ that span the solution space of the homogeneous equation \eqref{hom eq} when restricted to the descendants $(\widehat{\Gamma}_{11}^{2D})_{3D}\cup(\widehat{\Gamma}_{12}^{2D})_{3D}$ . 
%There are three linearly independent vector fields $\vec{Y}_{1}^{3D}$, $\vec{Y}_2^{3D}$ and $\vec{Y}_3^{3D}$ that span the solution space of the homogeneous equation \eqref{hom eq} when restricting the vector fields to linear combinations of the descendants $(\widehat{\Gamma}_{11}^{2D})_{3D}\cup(\widehat{\Gamma}_{12}^{2D})_{3D}$.
\begin{proof}
    See the attached code for the computation yielding precisely three vector fields solving equation \eqref{hom eq}. Explicitly, these vector fields are (with their encodings found directly below) 
    \footnotesize\begin{align*} %% double check!!
        \vec{Y}_{1}^{3D}&= 1\cdot \phi(\Gamma^{3D}_2)+1\cdot \phi(\Gamma^{3D}_{18})+1\cdot \phi(\Gamma^{3D}_{34})+1\cdot\phi(\Gamma^{3D}_{41})  \\
        \vec{Y}_{2}^{3D}&= 1\cdot \phi(\Gamma^{3D}_4)+\tfrac{1}{2}\cdot \phi(\Gamma^{3D}_{31})+\tfrac{1}{2}\cdot \phi(\Gamma^{3D}_{45})\\
        \vec{Y}_{3}^{3D}&= 1\cdot \phi(\Gamma^{3D}_{10})+1\cdot \phi(\Gamma^{3D}_{31})+2\cdot \phi(\Gamma^{3D}_{34})+2\cdot\phi(\Gamma^{3D}_{42})+1\cdot\phi(\Gamma^{3D}_{45}).
    \end{align*}
    \end{proof}
    \normalsize  The corresponding encodings\footnote{Here, $0$ is the sink, we have Levi-Civita vertices $1$, $2$ and $3$ with the respective Casimir vertices $4$, $5$ and $6$.} are as follows:%% inside or outside the proof?
   \footnotesize \begin{align*} %%double check!
        \Gamma^{3D}_2&=[0, 1, 4; 1, 6, 5; 1, 2, 6] & \Gamma^{3D}_4&= [0, 1, 4; 1, 6, 5; 4, 2, 6] & \Gamma^{3D}_{10}&=[0, 1, 4; 1, 6, 5; 1, 5, 6]\\
        \Gamma^{3D}_{18}&= [0, 2, 4; 1, 6, 5; 1, 2, 6]& \Gamma^{3D}_{31}&=[0, 2, 4; 4, 3, 5; 4, 5, 6] &\Gamma^{3D}_{34}&=[0, 5, 4; 1, 6, 5; 1, 2, 6] \\
        \Gamma^{3D}_{41}&= [0, 5, 4; 1, 3, 5; 1, 5, 6] &  \Gamma^{3D}_{42}&=[0, 5, 4; 1, 6, 5; 1, 5, 6] &   \Gamma^{3D}_{45}&= [0, 5, 4; 4, 3, 5; 1, 5, 6]
    \end{align*}
\normalsize
Again, we examine these degrees of freedom. 
\begin{lemma}
    There are seven nonisomorphic Nambu micro-graphs on two Levi-Civita vertices $1$, $2$, two corresponding Casimir vertices $3$, $4$ and no sink.
\end{lemma}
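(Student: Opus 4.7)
The plan is to enumerate by direct combinatorial analysis. In three dimensions every Levi-Civita vertex carries three outgoing edges to pairwise distinct targets, corresponding to the three indices of the antisymmetric tensor $\varepsilon_{ijk}$; since the ambient vertex set is $\{1,2,3,4\}$ and no sink is present, the targets of each LC form a $3$-subset of $\{1,2,3,4\}$. There are $\binom{4}{3}=4$ such subsets, so as a first step I would list the $4\times 4=16$ labelled candidate configurations, noting that the Nambu-determinant requirement that each LC have an outgoing edge to some Casimir vertex is automatic since every $3$-subset of $\{1,2,3,4\}$ meets $\{3,4\}$.

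Next I would quotient by the automorphism group of the vertex-labelled Nambu structure. This group is generated by two commuting involutions: the swap of the two Levi-Civita vertices $1\leftrightarrow 2$, and the swap of the two Casimir vertices $3\leftrightarrow 4$. The latter is a genuine isomorphism because both Casimirs represent copies of the same Casimir function of the Nambu-determinant bracket, so relabelling them preserves the associated multivector. The automorphism group is thus $\mathbb{Z}/2\times\mathbb{Z}/2$ of order four; a short Burnside count shows that the identity fixes all $16$ configurations while each of the three nontrivial elements fixes exactly $4$, giving $(16+4+4+4)/4=7$ orbits. Alternatively, one can list canonical encodings for the seven inequivalent graphs and verify exhaustively that these exhaust all cases (this is how the accompanying SageMath code would proceed).

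The main obstacle is identifying the correct automorphism group: a rigid pairing-preserving convention in which each Levi-Civita vertex is permanently bound to a specific Casimir label gives only $\mathbb{Z}/2$ and would undercount to $6$, while ignoring graph isomorphism altogether overcounts to $16$. The key observation, implicit throughout the paper, is that the two Casimir vertices label copies of the same scalar function and so may be relabelled independently of the Levi-Civita relabelling, restoring the count to the claimed value of $7$.
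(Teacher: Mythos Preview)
The paper offers no mathematical argument for this lemma; the seven graphs are obtained by computer enumeration with the \textsf{gcaops} package and then simply listed. Your hand enumeration is therefore more than the paper attempts, but the setup misidentifies the objects being counted.

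In a Nambu micro-graph each Levi-Civita vertex is structurally paired with its \emph{own} Casimir vertex, and one of its three outgoing edges is forced to land there (this is the edge carrying $\partial a$ inside each copy of $P=\varrho\,\varepsilon^{ijk}\partial_i a\,\partial_j\wedge\partial_k$; inspect any micro-graph encoding in the paper and you will find vertex~$1$ pointing to its Casimir~$3$ and vertex~$2$ to its Casimir~$4$). That leaves each Levi-Civita vertex only $\binom{3}{2}=3$ choices for its remaining two targets, hence $9$ labelled configurations rather than your $16$. Your weaker condition that each LC merely hit \emph{some} Casimir is not what the construction requires. Correspondingly, the pairing $1\!\leftrightarrow\!3$, $2\!\leftrightarrow\!4$ is part of the data, so a graph isomorphism that swaps the Casimir labels must also swap the Levi-Civita labels; the independent swap $3\leftrightarrow 4$ you invoke is not an isomorphism of Nambu micro-graphs. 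That both Casimir vertices carry the same function $a$ is a statement about the multivector $\phi(\Gamma)$, not about the graph $\Gamma$ --- precisely the ``synonym'' phenomenon the paper warns about in \S\ref{non-uniqueness of graphs}.

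Your Burnside count $(16+4+4+4)/4=7$ therefore lands on the right number for the wrong reasons: the seven $\mathbb{Z}/2\times\mathbb{Z}/2$-orbits on your $16$ configurations do \emph{not} coincide with the paper's seven graphs. For instance, $\Gamma^{3D}_{H_4}=[1,3,4;1,2,4]$ and $\Gamma^{3D}_{H_7}=[1,2,3;2,3,4]$, which the paper lists as distinct, are related by $(1\,2)(3\,4)$ and hence fall in a single orbit under your group, while your orbit containing $(\{1,2,4\},\{1,2,4\})$ has no representative satisfying the own-Casimir constraint and is absent from the paper's list.
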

 The encodings for these seven Hamiltonians are given directly below.
\footnotesize\begin{align*}\label{ham 3d}
    \Gamma^{3D}_{H_1}&= [2,3,4;1,3,4] & \Gamma^{3D}_{H_2}&=[2,3,4;2,3,4] & \Gamma^{3D}_{H_3}&=[2,3,4;1,2,4] & \Gamma^{3D}_{H_4}&=[1,3,4;1,2,4]\\
     \Gamma^{3D}_{H_5}&= [1,3,4;2,3,4] & \Gamma^{3D}_{H_6}&= [1,2,3;1,2,4]  &\Gamma^{3D}_{H_7}&=[1,2,3;2,3,4]
\end{align*}
\normalsize We detect the following relations (they are explicitly verified in the attached code): 
\footnotesize\begin{align}
    H_1^{3D}&=H_5^{3D}, & H_3^{3D}&=-H_4^{3D}=-H_7^{3D}.
\end{align}\normalsize
% In particular, it follows [github] that $H_1^{3D}$, $H_2^{3D}$, $H_3^{3D}$ and $H_6^{3D}$ form a maximally linearly independent set. 
\begin{rem}
    The graph $\Gamma^{3D}_{H_6}$ is precisely the three-dimensional embedding of the two-dimensional Hamiltonian $\Gamma^{2D}_{H_1}$ from Lemma \ref{2d ham}. Since we are working only over $(\widehat{\Gamma}^{2D}_{11})_{3D}$ and $(\widehat{\Gamma}^{2D}_{12})_{3D}$, there is no fourth linearly independent vector field $\vec{Y}^{4}_{3D}$ created from linear combinations of vector fields evaluated from $(\widehat{\Gamma}^{2D}_{11})_{3D}$ and $(\widehat{\Gamma}^{2D}_{12})_{3D}$ in dimension three satisfying $\llbracket P, \vec{Y}^{4}_{3D}\rrbracket=0$. When we run the same code over \textit{all} three-dimensional Nambu micro-graphs (with three Levi-Civita vertices, three corresponding Casimir vertices and one sink), we \textit{do} get this fourth vector field $\vec{Y}^{4}_{3D}$ which nontrivially depends on $\dif_P(H_6^{3D})=\llbracket P, H_6^{3D}\rrbracket$.
\end{rem}
\begin{theor}
    On $\mathbb{R}^3$, let $P$ be a \textup{(}degenerate\textup{)} Nambu-determinant Poisson bivector. Consider the complex \eqref{cochain complex} restricted to Hamiltonians on $2$ copies of $P$, vector fields on $3$ copies of $P$, bivectors on $4$ copies of $P$ and trivectors on $5$ copies of $P$. We establish that the corresponding homogeneous part of the first Poisson--Lichnerowicz cohomology $H^1_{\textup{gra}}(\mathbb{R}^3)$ is trivial.   
\end{theor}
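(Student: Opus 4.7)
The plan is to follow the strategy successfully used for $\mathbb{R}^2$: exhibit each basis vector of the homogeneous solution space as a Hamiltonian vector field $\dif_P(H^{3D})$, thereby establishing $\ker \dif_P \subseteq \im \dif_P$ in the relevant degree, which combined with the automatic inclusion $\im \dif_P\subseteq \ker \dif_P$ yields triviality of the homogeneous part of $H^1_{\textup{gra}}(\mathbb{R}^3)$.

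First I would compute the seven Hamiltonian vector fields $\dif_P(H^{3D}_j) = \llbracket P, H^{3D}_j\rrbracket$ for $j=1,\ldots,7$ by applying the morphism $\phi$ to the Schouten bracket of the Nambu-determinant bivector $P$ with each of the seven Hamiltonian micro-graphs listed in the preceding lemma. The detected identities $H^{3D}_1=H^{3D}_5$ and $H^{3D}_3=-H^{3D}_4=-H^{3D}_7$ collapse the span to at most four linearly independent Hamiltonians, namely $H^{3D}_1$, $H^{3D}_2$, $H^{3D}_3$ and $H^{3D}_6$. I would then assemble a linear system expressing each basis solution $\vec{Y}^{3D}_i$ from Proposition \ref{hom eq 3d} as a linear combination $\sum_j c_{ij}\,\dif_P(H^{3D}_j)$, with both sides expanded in the basis of graph-represented vector fields, and solve for the coefficients $c_{ij}$ using the attached SageMath code, exactly mirroring the $2\cdot \vec{Y}^{2D}=\dif_P(H_1^{2D})$ identity of the two-dimensional case.

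The main obstacle is conceptual rather than computational, and is precisely the phenomenon flagged by the remark immediately preceding the theorem. If one restricts attention to vector fields supported on descendants of $\Gamma^{2D}_{11}$ and $\Gamma^{2D}_{12}$ only three homogeneous solutions appear, and these one expects to be accounted for by $H^{3D}_1$, $H^{3D}_2$, $H^{3D}_3$; but the theorem is stated for the complex restricted only by the number of copies of $P$, and over this larger space a fourth homogeneous solution $\vec{Y}^{4}_{3D}$ arises whose expression as a Hamiltonian vector field genuinely depends on $\dif_P(H^{3D}_6)$ -- the three-dimensional embedding of the 2D Hamiltonian $\Gamma^{2D}_{H_1}$. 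I must therefore run the linear-algebra calculation over the full space of 3D Nambu micro-graphs on three Levi-Civita vertices with their three Casimir vertices and one sink, and confirm that the four-dimensional kernel of $\dif_P$ coincides with the image of $\dif_P$ on $\operatorname{span}(H^{3D}_1, H^{3D}_2, H^{3D}_3, H^{3D}_6)$. Once this verification succeeds the conclusion is immediate, in full analogy with the two-dimensional case.
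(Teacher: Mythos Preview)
Your proposal is correct and follows essentially the same route as the paper: compute the Hamiltonian vector fields $\dif_P(H^{3D}_j)$, then solve a linear system expressing each basis element $\vec{Y}^{3D}_i$ of $\ker\dif_P$ as a combination of them. The paper's proof records the explicit answers
\[
\vec{Y}^{3D}_1=\dif_P(H^{3D}_3),\qquad \vec{Y}^{3D}_2=\tfrac{1}{4}\dif_P(H^{3D}_1),\qquad \vec{Y}^{3D}_3=\tfrac{1}{2}\dif_P(H^{3D}_1)-\dif_P(H^{3D}_2),
\]
and concludes.

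One point worth flagging: you are right to worry about the scope. The theorem, as stated, concerns the complex graded only by the number of copies of $P$, yet the paper's formal proof treats only the three $\vec{Y}^{3D}_i$ supported on $(\widehat{\Gamma}^{2D}_{11})_{3D}\cup(\widehat{\Gamma}^{2D}_{12})_{3D}$ and uses only $H^{3D}_1,H^{3D}_2,H^{3D}_3$. The fourth kernel element over the full space, and its dependence on $\dif_P(H^{3D}_6)$, is relegated to the preceding remark rather than the proof itself. Your insistence on running the computation over \emph{all} 3D Nambu micro-graphs and matching the four-dimensional kernel against $\operatorname{span}\bigl(\dif_P(H^{3D}_1),\dif_P(H^{3D}_2),\dif_P(H^{3D}_3),\dif_P(H^{3D}_6)\bigr)$ is the more faithful execution of what the theorem literally asserts; the paper effectively does this too, but splits the evidence between the remark and the proof.
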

\begin{proof}[Proof (see the attached code).]
    We write each of the three vector fields $\vec{Y}_{i}^{3D}$ of Proposition \ref{hom eq 3d} in terms of the Hamiltonian vector fields $\dif_P(H_1^{3D})$, $\dif_P(H_2^{3D})$ and $\dif_P(H_3^{3D})$.
    We compute 
    \footnotesize\begin{align*}
        \vec{Y}_{1}^{3D}&= 1\cdot \dif_P(H^{3D}_3), & \vec{Y}_{2}^{3D}&=\tfrac{1}{4}\cdot \dif_P(H_1^{3D}), & \vec{Y}_{3}^{3D}&= \tfrac{1}{2}\cdot \dif_P(H_1^{3D})-1\cdot \dif_P(H_2^{3D}),         
    \end{align*}\normalsize
    that is, the degrees of freedom are provided by the Hamiltonian shifts. It follows immediately that the corresponding homogeneous part of  $H^1_{\text{gra}}(\mathbb{R}^3)=\ker \dif_P/\im \dif_P$ is trivial.
 \end{proof}
 \begin{cor}
    The trivializing vector field $\vec{X}^{\gamma_3}_{3D}$ over the descendants $(\widehat{\Gamma}^{2D}_{11})_{3D}$ and $(\widehat{\Gamma}^{2D}_{12})_{3D}$ of Proposition \ref{triviality 3d} is unique modulo Hamiltonian vector fields. 
\end{cor}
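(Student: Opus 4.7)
The plan is to read this corollary as a direct consequence of the preceding theorem combined with Proposition \ref{hom eq 3d}, exploiting the standard affine structure of the solution set of a linear inhomogeneous equation. First I would note that if $\vec{X}$ and $\vec{X}'$ are any two vector fields lying in the span of $\phi$-images of descendants in $(\widehat{\Gamma}^{2D}_{11})_{3D}\cup(\widehat{\Gamma}^{2D}_{12})_{3D}$ that both trivialize the tetrahedral flow, then their difference $\vec{Y}=\vec{X}-\vec{X}'$ satisfies $\llbracket P,\vec{Y}\rrbracket=0$ and still lies in that same linear span (the span is closed under subtraction). So $\vec{Y}$ is a solution of the homogeneous equation \eqref{hom eq} restricted to exactly the subspace treated in Proposition \ref{hom eq 3d}.

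Next I would invoke Proposition \ref{hom eq 3d}, which asserts that this restricted homogeneous solution space is three-dimensional and spanned explicitly by $\vec{Y}_1^{3D}$, $\vec{Y}_2^{3D}$, $\vec{Y}_3^{3D}$. Hence $\vec{Y}=\alpha_1\vec{Y}_1^{3D}+\alpha_2\vec{Y}_2^{3D}+\alpha_3\vec{Y}_3^{3D}$ for some scalars $\alpha_i\in\mathbb{R}$. The preceding theorem supplies the key piece: each $\vec{Y}_i^{3D}$ has already been identified as $\dif_P$ applied to an explicit linear combination of the Hamiltonians $H_1^{3D}, H_2^{3D}, H_3^{3D}\in C^{\infty}_{\textup{gra}}(\mathbb{R}^3)$. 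Linearity of $\dif_P$ then gives $\vec{Y}=\dif_P(H)$ for the Hamiltonian $H=\alpha_1 H+\cdots$ formed from the same coefficients, so $\vec{Y}$ is a Hamiltonian vector field as defined in section \ref{uniqueness of trivializing}.

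Conclusion: any two trivializing vector fields built from $(\widehat{\Gamma}^{2D}_{11})_{3D}\cup(\widehat{\Gamma}^{2D}_{12})_{3D}$ differ by a Hamiltonian vector field, which is exactly the uniqueness statement. I do not expect a real obstacle here; the one thing to be careful about is consistency of the \emph{ambient space} — we must confirm that the Hamiltonians $H$ produced this way lie in $C^{\infty}_{\textup{gra}}(\mathbb{R}^3)$ and that the equalities $\vec{Y}_i^{3D}=\dif_P(\cdots)$ of the previous theorem take place inside the restricted chain complex of \eqref{cochain complex} (otherwise the phrase ``modulo Hamiltonian vector fields'' might silently enlarge beyond the setting of the corollary). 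Since the previous theorem was proved precisely in that restricted complex, the conclusion is immediate.
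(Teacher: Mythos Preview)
Your argument is correct and is exactly the immediate deduction the paper intends: the corollary is stated without proof precisely because the preceding theorem (every $\vec{Y}_i^{3D}$ is $\dif_P$ of a graph Hamiltonian) plus Proposition~\ref{hom eq 3d} and the affine structure of solution sets already yield it. There is nothing to add.
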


\section{The trivializing vector fields modulo Hamiltonian vector fields: $\vec{X}^{\gamma_3}_{4D}$}\label{uniqueness dim 4}
In dimension four, triviality of the tetrahedral graph cocycle is established in \cite{mollie}. The trivializing vector field is found again over the descendants of the two-dimensional solution from Proposition~\ref{triviality 2d}, but in addition, we request that the vector field is skew-symmetric with respect to the two Casimirs $a^1$ and $a^2$, see \cite{mollie}. 
\begin{rem}
    As we are working with Nambu-determinant Poisson brackets, we see that the Poisson structure itself is skew-symmetric with respect to the Casimirs $a^1$ and $a^2$. By requesting that our Hamiltonians $H$ are symmetric, we ensure that the Hamiltonian vector fields $\dif_P(H)$ we consider are skew-symmetric with respect to $a^1$ and $a^2$.
\end{rem}
\begin{notation}
    Let us denote by $\phi^{-}(\Gamma_{4D})$ (respectively $\phi^{+}(\Gamma_{4D})$) the skew-symmetrized (respectively symmetrized) multivector obtained\footnote{Consider as an example the four-dimensional graph $\Gamma(a^1,a^2)$ with encoding $[0,1,4,7;1,3,6,9;1,5,8,9]$ where $0$ is the sink, $1,2,3$ are Levi-Civita vertices with corresponding $a^1$ Casimir vertices $4,5,6$ and $a^2$ Casimir vertices $7,8,9$. We simply swap the pairs of Casimirs vertices belonging to each Poisson structure to obtain $\Gamma(a^2,a^1)=[0,1,7,4;1,3,9,6;1,8,5,6]$. Then, $\phi^{-}=\tfrac{1}{2}(\phi(\Gamma(a^1,a^2))-\phi(\Gamma(a^2,a^1)))$.} from the graph $\Gamma_{4D}$ by swapping the Casimirs $a^1$ and $a^2$. We write $(H_{i}^{4D})^{+}$ for the symmetrized Hamiltonian function represented by the graph $\Gamma_{H_i}^{4D}$. 
\end{notation}
\begin{rem}
    In the following, (skew-)symmetry is \textit{always} with respect to the Casimirs $a^1$ and $a^2$.
\end{rem}
\begin{proposition}[{\cite[Proposition 8]{mollie}}]\label{triviality 4d} Over $\mathbb{R}^4$, there exists a skew solution $(\vec{X}^{\gamma_3}_{4D})^-$ solving equation~\eqref{non hom eq}; this trivializing vector field consists of $27$ skew-symmetrized vector fields obtained from the descendants $(\widehat{\Gamma}^{2D}_{11})_{4D}\cup(\widehat{\Gamma}^{2D}_{12})_{4D}$.
\end{proposition}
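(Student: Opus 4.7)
The plan follows the same template as Propositions~\ref{triviality 2d} and \ref{triviality 3d}, adapted to the richer combinatorics of dimension four. First I would enumerate the set $(\widehat{\Gamma}^{2D}_{11})_{4D}\cup(\widehat{\Gamma}^{2D}_{12})_{4D}$ of four-dimensional Nambu micro-graph descendants: for each seed graph $\Gamma_{11}^{2D}$, $\Gamma_{12}^{2D}$, attach two Casimir vertices (one for $a^1$, one for $a^2$) to every Levi-Civita vertex, and then use the Leibniz rule to redirect each original outgoing edge over all four vertices of every targeted Poisson structure. After filtering by graph isomorphism (using \textsf{gcaops}) one obtains a finite list of Nambu micro-graphs, which is the combinatorial arena for the ansatz.

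Next I would apply the morphism $\phi$ to these graphs and form the skew-symmetrized multivectors $\phi^-(\Gamma_i^{4D})=\tfrac{1}{2}(\phi(\Gamma_i^{4D}(a^1,a^2))-\phi(\Gamma_i^{4D}(a^2,a^1)))$, discarding any representatives whose $\phi^-$ vanishes identically and reducing the rest modulo the linear relations among them, exactly as in the 2D and 3D cases. An ansatz is then written as
\begin{align*}
    (\vec{X}^{\gamma_3}_{4D})^{-}=\sum_{i} c_i\, \phi^{-}(\Gamma_i^{4D}),
\end{align*}
with unknown rational coefficients $c_i$. I would substitute this into $\llbracket P,(\vec{X}^{\gamma_3}_{4D})^-\rrbracket=Q^{\gamma_3}_{4D}(P)$, expand both sides as a linear combination of the Nambu bivector graphs on five copies of $P$ (with the 4D embedding of the tetrahedral output on the right-hand side), and equate coefficients of the resulting independent monomials in $\varrho,a^1,a^2$ and their derivatives. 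This yields a large sparse linear system which I would solve with \textsf{gcaops}; any particular solution proves existence, and counting the nonvanishing $c_i$ in a minimal-support solution (or in the natural solution produced by the solver) should give the number $27$.

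Before running the solver it is important to verify that the right-hand side $Q^{\gamma_3}_{4D}(P)$ is itself skew-symmetric under $a^1\leftrightarrow a^2$; otherwise the skew ansatz cannot succeed. This follows from the fact that the Nambu bracket $P$ on $\mathbb{R}^4$ is skew under $a^1\leftrightarrow a^2$ and $Q^{\gamma_3}_{4D}$ is polynomial in $P$ of odd total degree in the two Casimirs, but the identity should be confirmed by direct computation to guarantee consistency of the linear system.

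The main obstacle is the combinatorial explosion between dimensions three and four: each Levi-Civita vertex now carries four outgoing edges (instead of three), so the number of descendants of each seed graph grows roughly as $(4/3)^6$ before isomorphism reduction, and the target $Q^{\gamma_3}_{4D}(P)$ has to be expanded on five copies of $P$, each with two Casimirs. Keeping the linear system small enough to be tractable requires both the skew-symmetry reduction (essentially halving the number of independent unknowns) and the restriction to descendants of only the two seed graphs $\Gamma_{11}^{2D},\Gamma_{12}^{2D}$ rather than all fourteen; verifying that this restricted ansatz still has a solution — that is, that the 4D obstructions seen in the full graph complex do not appear within the descendant subspace — is the delicate point, and in practice is confirmed only \emph{a posteriori} by the success of the solver.
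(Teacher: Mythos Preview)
Your outline matches the computational strategy that the paper (deferring to \cite{mollie} and the attached code) actually uses: enumerate the 4D descendants of $\Gamma^{2D}_{11}$ and $\Gamma^{2D}_{12}$, skew-symmetrize, and solve the resulting linear system. There is no separate argument in the present paper beyond that reference.

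Two small slips in your write-up are worth correcting. First, the parity check is stated backwards: since $P$ is skew under $a^1\leftrightarrow a^2$ and the tetrahedral cocycle inserts \emph{four} copies of $P$, the flow $Q^{\gamma_3}_{4D}(P)$ is \emph{symmetric}, not skew; likewise $\llbracket P,(\vec{X})^-\rrbracket$ is (skew)$\times$(skew) $=$ symmetric, so the equation is consistent --- but not for the ``odd total degree in the Casimirs'' reason you give. Second, both sides of the coboundary equation are bivectors built from \emph{four} copies of $P$ (three in $\vec{X}$ plus one from $\dif_P$), not five. Neither slip affects the method, but both would cause confusion if left in a final version.
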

\begin{proposition}\label{hom eq 4d}
    There are seven linearly independent vector fields $\vec{Y}_{1}^{4D}$, $\vec{Y}_{2}^{4D}$, $\vec{Y}_{3}^{4D}$, $\vec{Y}_{4}^{4D}$, $\vec{Y}_{5}^{4D}$, $\vec{Y}_{6}^{4D}$ and $\vec{Y}_{7}^{4D}$ that span the solution space of $\llbracket P, \vec{X}^{4D}\rrbracket=0$ when restricting to solutions over linear combinations of the skew-symmetrized descendants $(\widehat{\Gamma}^{2D}_{11})_{4D}\cup(\widehat{\Gamma}^{2D}_{12})_{4D}$. 
\end{proposition}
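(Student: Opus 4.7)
The plan is to mirror the computational strategy used in Propositions \ref{hom eq 2d} and \ref{hom eq 3d}, now in the more intricate four-dimensional setting with two Casimirs per Poisson structure. First, I would enumerate all nonisomorphic Nambu micro-graphs in the union $(\widehat{\Gamma}^{2D}_{11})_{4D}\cup(\widehat{\Gamma}^{2D}_{12})_{4D}$ (each Levi-Civita vertex now carries two Casimir vertices $a^1$ and $a^2$, so the two outgoing edges of the 2D predecessor are redirected via Leibniz over all vertices of the targeted Poisson structures). For every such graph $\Gamma_{4D}$ I would form its skew-symmetrization $\phi^-(\Gamma_{4D})$ with respect to the swap $a^1\leftrightarrow a^2$, obtaining the finite family of candidate skew vector fields that spans the ansatz space.

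Next, I would write the unknown $\vec{X}^{4D}$ as a generic linear combination, with scalar coefficients $c_i$, of these skew-symmetrized vector fields, and impose the homogeneous equation $\llbracket P,\vec{X}^{4D}\rrbracket=0\in\mathfrak{X}^2_{\textup{gra}}(\mathbb{R}^4)$. Expanding the Schouten bracket against the Nambu-determinant Poisson structure yields a bivector whose components are polynomials in the partial derivatives of $\varrho$, $a^1$ and $a^2$; requiring all these differential monomial coefficients to vanish produces a large but finite linear algebraic system in the unknowns $c_i$. I would solve this system using the SageMath package \textsf{gcaops}, read off the dimension of its kernel, and echelon-reduce a basis to exhibit the explicit vector fields $\vec{Y}_1^{4D},\ldots,\vec{Y}_7^{4D}$ as in Proposition~\ref{hom eq 3d}.

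The hard part will be of two kinds. First, the bookkeeping: with two Casimir vertices per Poisson structure, both the number of graphs in $(\widehat{\Gamma}^{2D}_{11})_{4D}\cup(\widehat{\Gamma}^{2D}_{12})_{4D}$ and the number of independent differential monomials appearing after the Schouten expansion grow substantially compared with dimension three, so care is needed to ensure that the system is written out in full and that no linear relation among the output multivectors is missed. Second, the verification: to be confident that the kernel has dimension exactly seven, I would independently check that the produced seven vector fields are genuinely skew with respect to $a^1\leftrightarrow a^2$, evaluate each $\llbracket P,\vec{Y}_i^{4D}\rrbracket$ and confirm it vanishes as a polynomial identity, and test linear independence by specializing $(\varrho,a^1,a^2)$ to generic polynomials and checking that the resulting evaluation matrix has full rank seven. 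This output then feeds directly into the subsequent theorem, where each $\vec{Y}_i^{4D}$ is to be rewritten as a Hamiltonian vector field $\dif_P(H^{4D})$.
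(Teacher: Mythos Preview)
Your proposal is correct and follows essentially the same approach as the paper: enumerate the skew-symmetrized descendants, set up the homogeneous linear system $\llbracket P,\vec{X}^{4D}\rrbracket=0$ in the undetermined coefficients, and solve it with \textsf{gcaops} to find a seven-dimensional kernel, exhibiting the basis $\vec{Y}_1^{4D},\ldots,\vec{Y}_7^{4D}$ explicitly. The additional sanity checks you propose (direct verification of skewness, vanishing of each $\llbracket P,\vec{Y}_i^{4D}\rrbracket$, and linear independence via generic specialization) go slightly beyond what the paper records but are entirely in its spirit.
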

\begin{proof}
    See the attached code for the computation yielding precisely seven skew-symmetric vector fields solving equation \eqref{hom eq}. Explicitly, these vector fields are 
    \footnotesize\begin{align*}
        \vec{Y}^{1}_{4D}&= 1\cdot \phi^{-}(\Gamma_2^{4D})-\tfrac{1}{2}\cdot \phi^{-}(\Gamma_9^{4D})+1\cdot \phi^{-}(\Gamma_{26}^{4D})+\tfrac{1}{2}\cdot \phi^{-}(\Gamma_{33}^{4D})+1\cdot \phi^{-}(\Gamma_{35}^{4D})-1\cdot \phi^{-}(\Gamma_{36}^{4D})+1\cdot \phi^{-}(\Gamma_{40}^{4D})\\
        & \quad -1\cdot \phi^{-}(\Gamma_{41}^{4D})+\tfrac{1}{2}\cdot \phi^{-}(\Gamma_{42}^{4D})+1\cdot \phi^{-}(\Gamma_{48}^{4D})+1\cdot \phi^{-}(\Gamma_{61}^{4D})\\
        \vec{Y}_{4D}^2&=1\cdot \phi^{-}(\Gamma_4^{4D})+\tfrac{1}{2}\cdot \phi^{-}(\Gamma_9^{4D})-1\cdot \phi^{-}(\Gamma_{35}^{4D})+1\cdot \phi^{-}(\Gamma_{36}^{4D})+1\cdot \phi^{-}(\Gamma_{41}^{4D})-\tfrac{1}{2}\cdot \phi^{-}(\Gamma_{42}^{4D}) \\
        \vec{Y}_{4D}^3&= 1\cdot \phi^{-}(\Gamma_{10}^{4D})-1\cdot \phi^{-}(\Gamma_{16}^{4D})+1\cdot \phi^{-}(\Gamma_{18}^{4D})+1\cdot \phi^{-}(\Gamma_{20}^{4D})-\tfrac{1}{2}\cdot \phi^{-}(\Gamma_{24}^{4D})-1\cdot \phi^{-}(\Gamma_{31}^{4D})-1\cdot \phi^{-}(\Gamma_{34}^{4D})\\
        &\quad -1\cdot \phi^{-}(\Gamma_{35}^{4D})+1\cdot \phi^{-}(\Gamma_{36}^{4D})+2\cdot \phi^{-}(\Gamma_{40}^{4D})+1\cdot \phi^{-}(\Gamma_{41}^{4D})- 1\cdot \phi^{-}(\Gamma_{43}^{4D})-1\cdot \phi^{-}(\Gamma_{45}^{4D})+1\cdot \phi^{-}(\Gamma_{46}^{4D})\\
        &\quad -1\cdot \phi^{-}(\Gamma_{47}^{4D})-\tfrac{1}{2}\cdot \phi^{-}(\Gamma_{54}^{4D})
        -1\cdot \phi^{-}(\Gamma_{61}^{4D})+1\cdot \phi^{-}(\Gamma_{63}^{4D})-\tfrac{1}{2}\cdot \phi^{-}(\Gamma_{64}^{4D}) \\
        \vec{Y}_{4D}^4&= 1\cdot \phi^{-}(\Gamma_{12}^{4D})+1\cdot \phi^{-}(\Gamma_{16}^{4D})-1\cdot \phi^{-}(\Gamma_{18}^{4D})-1\cdot \phi^{-}(\Gamma_{20}^{4D})+\tfrac{1}{2}\cdot \phi^{-}(\Gamma_{24}^{4D})+1\cdot \phi^{-}(\Gamma_{31}^{4D})+1\cdot \phi^{-}(\Gamma_{35}^{4D})\\
        &\quad -1\cdot \phi^{-}(\Gamma_{36}^{4D})+1\cdot \phi^{-}(\Gamma_{43}^{4D})-1\cdot \phi^{-}(\Gamma_{46}^{4D})+1\cdot \phi^{-}(\Gamma_{47}^{4D})+\tfrac{1}{2}\cdot \phi^{-}(\Gamma_{54}^{4D})-1\cdot \phi^{-}(\Gamma_{63}^{4D})+\tfrac{1}{2}\cdot \phi^{-}(\Gamma_{64}^{4D})\\
        \vec{Y}_{4D}^5&= 1\cdot \phi^{-}(\Gamma_{14}^{4D})+4\cdot \phi^{-}(\Gamma_{16}^{4D})-4\cdot \phi^{-}(\Gamma_{18}^{4D})+4\cdot \phi^{-}(\Gamma_{31}^{4D})+1\cdot \phi^{-}(\Gamma_{33}^{4D})+2\cdot \phi^{-}(\Gamma_{49}^{4D})+4\cdot \phi^{-}(\Gamma_{62}^{4D})\\
        \vec{Y}_{4D}^6&= 1\cdot \phi^{-}(\Gamma_{15}^{4D})+1\cdot \phi^{-}(\Gamma_{34}^{4D})+2\cdot \phi^{-}(\Gamma_{50}^{4D})+2\cdot \phi^{-}(\Gamma_{62}^{4D})\\
        \vec{Y}_{4D}^7&= 1\cdot \phi^{-}(\Gamma_{22}^{4D})-2\cdot \phi^{-}(\Gamma_{44}^{4D})+1\cdot \phi^{-}(\Gamma_{54}^{4D})+1\cdot \phi^{-}(\Gamma_{64}^{4D}).
    \end{align*}\normalsize
\end{proof}
 The corresponding encodings\footnote{Here, $0$ is the sink, we have Levi-Civita vertices $1$, $2$, $3$ whereas $4$, $5$, $6$ (respectively $7$, $8$, $9$) are the corresponding $a^1$ Casimir vertices (respectively $a^2$ Casimir vertices).} are: 
\footnotesize \begin{align*}
    \Gamma^{4D}_2& =[0, 1, 4, 7; 1, 6, 5, 8; 1, 2, 6, 9] & \Gamma^{4D}_4& =[0, 1, 4, 7; 1, 6, 5, 8; 4, 2, 6, 9] & \Gamma^{4D}_9& = [0, 1, 4, 7; 4, 6, 5, 8; 4, 2, 6, 9]\\
    \Gamma^{4D}_{10}& = [0, 1, 4, 7; 4, 9, 5, 8; 4, 2, 6, 9] & \Gamma^{4D}_{12}& = [0, 1, 4, 7; 4, 6, 5, 8; 7, 2, 6, 9] & \Gamma^{4D}_{14}& =[0, 1, 4, 7; 1, 6, 5, 8; 1, 5, 6, 9]  \\
    \Gamma^{4D}_{15}& = [0, 1, 4, 7; 1, 9, 5, 8; 1, 5, 6, 9] & \Gamma^{4D}_{16}& =[0, 1, 4, 7; 1, 9, 5, 8; 4, 5, 6, 9]  & \Gamma^{4D}_{18}& = [0, 1, 4, 7; 1, 9, 5, 8; 7, 5, 6, 9] \\
    \Gamma^{4D}_{20}& = [0, 1, 4, 7; 4, 9, 5, 8; 4, 5, 6, 9] & \Gamma^{4D}_{22}& = [0, 1, 4, 7; 4, 9, 5, 8; 7, 5, 6, 9] & \Gamma^{4D}_{24}& = [0, 1, 4, 7; 7, 6, 5, 8; 7, 5, 6, 9] \\
    \Gamma^{4D}_{26}& =[0, 2, 4, 7; 1, 6, 5, 8; 1, 2, 6, 9] & \Gamma^{4D}_{31}& =[0, 2, 4, 7; 4, 6, 5, 8; 7, 2, 6, 9] & \Gamma^{4D}_{33}& = [0, 2, 4, 7; 1, 6, 5, 8; 1, 5, 6, 9]\\
    \Gamma^{4D}_{34}& = [0, 2, 4, 7; 1, 9, 5, 8; 1, 5, 6, 9] & \Gamma^{4D}_{35}& = [0, 2, 4, 7; 1, 9, 5, 8; 4, 5, 6, 9]& \Gamma^{4D}_{36}& =[0, 2, 4, 7; 1, 9, 5, 8; 7, 5, 6, 9]\\
    \Gamma^{4D}_{40}& =[0, 5, 4, 7; 1, 9, 5, 8; 1, 2, 6, 9] & \Gamma^{4D}_{41}& =[0, 5, 4, 7; 1, 9, 5, 8; 4, 2, 6, 9]  & \Gamma^{4D}_{42}& = [0, 5, 4, 7; 4, 3, 5, 8; 4, 2, 6, 9]\\
     \Gamma^{4D}_{43}& = [0, 5, 4, 7; 4, 9, 5, 8; 4, 2, 6, 9]& \Gamma^{4D}_{44}& = [0, 5, 4, 7; 7, 9, 5, 8; 4, 2, 6, 9] & \Gamma^{4D}_{45}& = [0, 5, 4, 7; 7, 3, 5, 8; 7, 2, 6, 9]\\
     \Gamma^{4D}_{45}& =[0, 5, 4, 7; 7, 3, 5, 8; 7, 2, 6, 9] &  \Gamma^{4D}_{46}& =[0, 5, 4, 7; 7, 6, 5, 8; 7, 2, 6, 9] &  \Gamma^{4D}_{47}& =[0, 2, 4, 7; 4, 3, 5, 8; 1, 5, 6, 9] \\
    \Gamma^{4D}_{48}& =[0, 5, 4, 7; 1, 3, 5, 8; 1, 5, 6, 9] & \Gamma^{4D}_{49}& =[0, 5, 4, 7; 1, 6, 5, 8; 1, 5, 6, 9] & \Gamma^{4D}_{50}& =[0, 5, 4, 7; 1, 9, 5, 8; 1, 5, 6, 9]\\
    \Gamma^{4D}_{54}& =[0, 5, 4, 7; 4, 9, 5, 8; 7, 5, 6, 9] & \Gamma^{4D}_{61}& = [0, 5, 4, 7; 1, 3, 5, 8; 1, 8, 6, 9] & \Gamma^{4D}_{62}& =[0, 5, 4, 7; 1, 6, 5, 8; 1, 8, 6, 9]\\
    \Gamma^{4D}_{63}& =[0, 5, 4, 7; 7, 3, 5, 8; 7, 8, 6, 9] &  \Gamma^{4D}_{64}& =[0, 5, 4, 7; 7, 6, 5, 8; 7, 8, 6, 9].
\end{align*}
\normalsize 
\begin{lemma}
    There are $21$ nonisomorphic Nambu micro-graphs on two Levi-Civita vertices $1$, $2$, with two corresponding $a^1$ Casimir vertices $3$, $4$ and two corresponding $a^2$ Casimir vertices $5$, $6$ and no sink. The encodings for these $21$ Hamiltonians are given below. 
\footnotesize\begin{align*}
    \Gamma^{4D}_{H_1}&= [1,2,3,5;1,2,4,6]& \Gamma^{4D}_{H_2}&=[1,2,3,5;2,3,4,6] & \Gamma^{4D}_{H_3}&=[1,2,3,5;2,4,5,6] & \Gamma^{4D}_{H_4}&=[1,3,4,5;2,3,4,6]\\
    \Gamma^{4D}_{H_5}&=[1,3,4,5;2,4,5,6] & \Gamma^{4D}_{H_6}&=[1,3,5,6;2,4,5,6] &\Gamma^{4D}_{H_7}&=[1,2,3,5;1,3,4,6]& \Gamma^{4D}_{H_8}&=[1,2,3,5;1,4,5,6]\\
    \Gamma^{4D}_{H_9}&= [1,2,3,5;3,4,5,6] &\Gamma^{4D}_{H_{10}}&=[1,3,4,5;1,3,4,6]& \Gamma^{4D}_{H_{11}}&=[1,3,5,6;1,3,4,6]& \Gamma^{4D}_{H_{12}}&=[1,3,4,5;1,4,5,6]\\
    \Gamma^{4D}_{H_{13}}&=[1,3,5,6;1,4,5,6]& \Gamma^{4D}_{H_{14}}&=[1,3,4,5;3,4,5,6] & \Gamma^{4D}_{H_{15}}&=[1,3,5,6;3,4,5,6] &\Gamma^{4D}_{H_{16}}&=[2,3,4,5;1,3,4,6]\\
    \Gamma^{4D}_{H_{17}}&=[2,3,5,6;1,4,5,6] & \Gamma^{4D}_{H_{18}}&=[2,3,4,5;1,4,5,6] & \Gamma^{4D}_{H_{19}}&=[2,3,4,5;3,4,5,6] & \Gamma^{4D}_{H_{20}}&=[2,3,5,6;3,4,5,6]\\
     \Gamma^{4D}_{H_{21}}&=[3,4,5,6;3,4,5,6]    
\end{align*}
\end{lemma}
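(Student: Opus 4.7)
The approach is a direct combinatorial enumeration, carried out with the SageMath package \textsf{gcaops} in the same style as the other combinatorial counts in this paper. The plan has four steps.

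First, I would formalize the combinatorial class. A Nambu micro-graph in dimension four on the prescribed vertex set has two Levi-Civita vertices $1$, $2$, each issuing $4$ ordered outgoing edges whose targets lie in $\{1,2,3,4,5,6\}$, together with four Casimir vertices ($3,4$ of type $a^1$; $5,6$ of type $a^2$) that carry no outgoing edges, and no sink. The admissibility conditions that must be built in, implicit in the earlier sections and in the definition of the morphism $\phi$, are: every Casimir vertex is hit by at least one edge; and configurations that force $\phi(\Gamma)$ to vanish identically by the antisymmetry of the Levi-Civita tensor at a single Poisson vertex (for example, two equal targets among the four ordered outgoing edges of one Levi-Civita vertex) are discarded. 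One must also keep the count of edges at each Levi-Civita vertex going to the $a^1$ and to the $a^2$ Casimirs consistent with the Nambu-determinant structure of $P$ as redirected by the Leibniz rule.

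Second, I would fix the isomorphism relation on encodings. The structure-preserving symmetries are: the simultaneous swap $1 \leftrightarrow 2$, $3 \leftrightarrow 4$, $5 \leftrightarrow 6$ of Levi-Civita vertices together with their $a^1$ and $a^2$ Casimir partners; and, at each Levi-Civita vertex independently, permutations of its four ordered outgoing edges, which act on $\phi$ only by an overall sign so that the represented Hamiltonian is well defined up to sign. Two encodings yielding the same oriented graph modulo this action are identified.

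Third, using \textsf{gcaops} one enumerates all admissible encodings, reduces each to a canonical form under the above symmetry group, and collects the distinct canonical representatives. One then verifies that the output is exactly the list $\Gamma^{4D}_{H_1}, \ldots, \Gamma^{4D}_{H_{21}}$ printed in the statement: that no two of these $21$ encodings are equivalent, and that every admissible encoding reduces to one of them.

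The only real obstacle is bookkeeping rather than conceptual: one must align the notion of \emph{admissible Nambu micro-graph on this vertex set} (in particular, the precise handling of repeated targets at a single Levi-Civita vertex, the correct group acting on the ordered edges, and the correct accounting of $a^1$- and $a^2$-derivatives) with the conventions fixed earlier in this paper and in \cite{mollie, buring2023tower}. Once those conventions are locked in, the remainder is a finite enumeration that \textsf{gcaops} performs routinely, and whose $21$-element output cross-checks directly against the listed encodings.
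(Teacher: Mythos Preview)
Your proposal is correct and matches the paper's approach: this lemma, like the analogous enumeration lemmas in the two- and three-dimensional sections, is established by a direct computer enumeration using \textsf{gcaops}, with the explicit list serving as both statement and verification. The paper does not spell out the admissibility conditions or the isomorphism group any more explicitly than you do; the details live in the attached code.
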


\normalsize We detect the following relations (see the attached code). 
\footnotesize\begin{align}\label{ham 4d}
    H_2^{4D}&=-H_7^{4D} &H_4^{4D}&=H_{16}^{4D} &H_6^{4D}&=H_{17}^{4D}& H_{11}^{4D}&=H_{12}^{4D} & H_{15}^{4D}&=H_{20}^{4D}\\
    H_3^{4D}&=-H_8^{4D} &H_5^{4D}&=H_{18}^{4D}& H_9^{4D}&=0 &H_{14}^{4D}&=H_{19}^{4D}    \nonumber   
\end{align}
\normalsize Note that these Hamiltonians are not yet symmetric under $a^1$ and $a^2$. After symmetrizing, we find a maximal linearly independent set consisting of only $(H^{4D}_{1})^{+}$, $(H^{4D}_{2})^{+}$, $(H^{4D}_{4})^{+}$, $(H^{4D}_{5})^{+}$, $(H^{4D}_{10})^{+}$, $(H^{4D}_{11})^{+}$, $(H^{4D}_{14})^{+}$ and $(H^{4D}_{21})^{+}$, see the attached code.
\begin{rem}
    Again, $\Gamma^{4D}_{H_1}$ is precisely the four-dimensional embedding of the two-dimensional Hamiltonian $H^{2D}_1$. As we are only working over $(\widehat{\Gamma}_{11}^{2D})_{4D}\cup(\widehat{\Gamma}_{12}^{2D})_{4D}$, none of the vector fields $\vec{Y}_{i}^{4D}$ will dependent on this Hamiltonian. 
\end{rem}
\begin{theor}
    On $\mathbb{R}^4$, let $P$ be a (degenerate) Nambu-determinant Poisson bivector. Consider the complex \eqref{cochain complex} restricted to symmetric Hamiltonians on $2$ copies of $P$, skew-symmetric vector fields on $3$ copies of $P$, symmetric bivectors on $4$ copies of $P$, etc. We establish that the corresponding homogeneous part of the first Poisson--Lichnerowicz cohomology $H^1_{\textup{gra}}(\mathbb{R}^4)$ is trivial. 
\end{theor}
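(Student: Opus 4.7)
The plan is to follow the same strategy used in the $\mathbb{R}^2$ and $\mathbb{R}^3$ theorems: express each of the seven skew-symmetric homogeneous solutions $\vec{Y}^i_{4D}$ from Proposition~\ref{hom eq 4d} as an explicit linear combination of Hamiltonian vector fields $\dif_P((H^{4D}_j)^+)$ built from the eight linearly independent symmetrized Hamiltonians listed immediately before the theorem. Because $P$ is skew under the swap of Casimirs $a^1\leftrightarrow a^2$ and each $(H^{4D}_j)^+$ is symmetric, every $\dif_P((H^{4D}_j)^+)$ is automatically skew-symmetric and hence lives in the ambient subspace in which the $\vec{Y}^i_{4D}$ reside. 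By graded Jacobi and $\llbracket P,P\rrbracket=0$, each Hamiltonian vector field automatically sits in $\ker \dif_P$, so it is a legitimate candidate for matching a homogeneous solution.

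Concretely, I would first apply $\dif_P=\llbracket P,\ \cdot\ \rrbracket$ to each of the eight symmetrized Hamiltonians, expressing the results as linear combinations of Nambu micro-graphs on three Levi-Civita vertices with their six Casimir partners and one sink. Next, I would assemble and solve the linear system
\begin{align*}
    \vec{Y}^i_{4D}=\sum_{j} c^i_j \,\dif_P((H^{4D}_j)^+), \qquad i=1,\dots,7,
\end{align*}
by comparing coefficients of the underlying partial-derivative monomials at the sink — precisely the mechanism already used in sections~\ref{uniqueness dim 2} and~\ref{uniqueness dim 3}, and entirely feasible via the \textsf{gcaops} package. A built-in consistency check is that the embedded Hamiltonian $(H^{4D}_1)^+$ must not enter any decomposition: by the remark preceding the theorem its Hamiltonian vector field does not lie in the span of descendants of $\Gamma^{2D}_{11}\cup\Gamma^{2D}_{12}$, so only the remaining seven Hamiltonian vector fields should participate, yielding a square match against the seven linearly independent $\vec{Y}^i_{4D}$. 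Once all seven decompositions are exhibited, each $\vec{Y}^i_{4D}\in\im \dif_P$, hence the homogeneous part of $H^1_{\textup{gra}}(\mathbb{R}^4)=\ker\dif_P/\im\dif_P$ in the restricted complex vanishes.

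The main obstacle is computational scale rather than conceptual novelty: the skew-symmetrized span of the descendants of $\Gamma^{2D}_{11}\cup\Gamma^{2D}_{12}$ is of very large combinatorial dimension, the kernel of $\phi$ restricted to this span is nontrivial, and both the skew-symmetry constraint on vector fields and the symmetry constraint on Hamiltonians under $a^1\leftrightarrow a^2$ must be tracked carefully while forming and reducing the linear system. No new theoretical ingredient is required beyond the linear algebra that already settled the two lower-dimensional cases; what is nontrivial is the honest bookkeeping of (i) isomorphism classes of four-dimensional Nambu micro-graphs, (ii) the explicit swap action on multi-indices, and (iii) the linear relations among $\phi$-images that reduce the 21 Hamiltonian graphs to the eight symmetrized linearly independent ones, which alone makes the square matching possible.
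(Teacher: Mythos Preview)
Your proposal is correct and follows essentially the same approach as the paper: express each of the seven skew-symmetric homogeneous solutions $\vec{Y}^i_{4D}$ as an explicit $\mathbb{R}$-linear combination of the Hamiltonian vector fields $\dif_P((H^{4D}_j)^+)$ for $j\in\{2,4,5,10,11,14,21\}$, omitting the embedded $(H^{4D}_1)^+$ exactly as you anticipated. The paper simply records the resulting seven coefficient identities (obtained via \textsf{gcaops}) and draws the same conclusion.
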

\begin{proof}[Proof (see the attached code)]
    We write each of the seven skew-symmetric vector fields $\vec{Y}_{i}^{4D}$ of Proposition \ref{hom eq 4d} in terms of the skew-symmetric Hamiltonian vector fields $\dif_P((H_{2}^{4D})^{+})$, $\dif_P((H_{4}^{4D})^{+})$, $\dif_P((H_{5}^{4D})^{+})$, $\dif_P((H_{10}^{4D})^{+})$, $\dif_P((H_{11}^{4D})^{+})$, $\dif_P((H_{14}^{4D})^{+})$ and $\dif_P((H_{21}^{4D})^{+})$. We compute
    \footnotesize\begin{align*}
        \vec{Y}^1_{4D}&= 1\cdot \dif_P((H_2^{4D})^{+})+\tfrac{1}{4}\cdot \dif_P((H_4^{4D})^{+}) & \vec{Y}^5_{4D}&= 1\cdot \dif_P((H^{4D}_{10})^{+})  \\
        \vec{Y}^2_{4D}&= -\tfrac{1}{4}\cdot \dif_P((H_4^{4D})^{+}) &\vec{Y}^6_{4D}&= -1\cdot \dif_P((H^{4D}_{11})^{+}) \\
        \vec{Y}^3_{4D}&= \tfrac{1}{2}\cdot \dif_P((H_{5}^{4D})^{+})-\tfrac{1}{2}\cdot \dif_P((H_{14}^{4D})^{+})-\tfrac{1}{16}\cdot \dif_P((H_{21}^{4D})^{+})& \vec{Y}^7_{4D}&= \tfrac{1}{8}\cdot \dif_P((H_{21}^{4D})^{+}) \\
        \vec{Y}^4_{4D}&= \tfrac{1}{2}\cdot \dif_P((H_{14}^{4D})^{+})+\tfrac{1}{16}\cdot \dif_P((H_{21}^{4D})^{+})
    \end{align*}\normalsize
    that is, the degrees of freedom are provided by the Hamiltonian shifts. It follows immediately that the corresponding homogeneous part of $H^1_{\text{gra}}(\mathbb{R}^4)=\ker \dif_P/\im \dif_P$ is trivial.
\end{proof}
\begin{cor}
    The trivializing skew-symmetric vector field $(\vec{X}^{\gamma_3}_{4D})^-$ over skew-symmetrized vector fields obtained from the descendants $(\widehat{\Gamma}^{2D}_{11})_{4D}$ and $(\widehat{\Gamma}^{2D}_{12})_{4D}$ of Proposition \ref{triviality 4d} is unique modulo skew-symmetric Hamiltonian vector fields. 
\end{cor}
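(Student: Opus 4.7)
The plan is to deduce the corollary directly from the preceding theorem, with no fresh computation required. I would first fix two trivializing vector fields $(\vec{X}^{\gamma_3}_{4D})^-$ and $(\tilde{\vec{X}}^{\gamma_3}_{4D})^-$, both lying in the skew-symmetric span of the descendants $(\widehat{\Gamma}^{2D}_{11})_{4D}\cup(\widehat{\Gamma}^{2D}_{12})_{4D}$, each solving the non-homogeneous equation~\eqref{non hom eq}. By linearity of $\dif_P$, their difference $\vec{Y}:=(\vec{X}^{\gamma_3}_{4D})^- - (\tilde{\vec{X}}^{\gamma_3}_{4D})^-$ lies in the same skew-symmetric class and satisfies $\llbracket P,\vec{Y}\rrbracket=0$.

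Next, I would invoke Proposition~\ref{hom eq 4d}, which asserts that over this class the homogeneous solution space is exactly $7$-dimensional and is spanned by $\vec{Y}_{4D}^1,\ldots,\vec{Y}_{4D}^7$. Hence $\vec{Y}$ is a linear combination of these seven vector fields. The preceding theorem then provides the key input: each $\vec{Y}_{4D}^i$ has been written explicitly as a skew-symmetric Hamiltonian vector field $\dif_P\bigl((H_j^{4D})^+\bigr)$ for appropriate symmetric Hamiltonians. Since $\dif_P$ is $\mathbb{R}$-linear, the same linear combination that expresses $\vec{Y}$ in terms of the $\vec{Y}_{4D}^i$ exhibits $\vec{Y}$ as a skew-symmetric Hamiltonian vector field, establishing uniqueness modulo such vector fields.

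The only points that genuinely require attention are bookkeeping rather than computation: one should verify that the class on which uniqueness is being asserted (skew-symmetrized linear combinations of vector fields from $(\widehat{\Gamma}^{2D}_{11})_{4D}\cup(\widehat{\Gamma}^{2D}_{12})_{4D}$) is precisely the class on which Proposition~\ref{hom eq 4d} gives the complete homogeneous solution space, and that the symmetric Hamiltonians $(H_j^{4D})^+$ used in the preceding theorem do produce skew-symmetric Hamiltonian vector fields (guaranteed by the remark that $P$ is itself skew in $a^1,a^2$). Assuming this bookkeeping, which is already set up in the preceding sections, the corollary is immediate; I would therefore expect no substantive obstacle.
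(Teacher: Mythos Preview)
Your proposal is correct and matches the paper's approach: the corollary is stated without proof because it follows immediately from the preceding theorem by exactly the argument you outline (difference of two trivializing vector fields lies in the homogeneous solution space, which by Proposition~\ref{hom eq 4d} and the theorem is contained in the image of $\dif_P$). The bookkeeping points you raise are already handled in the surrounding text, so nothing further is needed.
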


\section{Non-uniqueness of graphs}\label{non-uniqueness of graphs}
%Let us introduce the main notion of this section.
\begin{define}
    Two topologically nonisomorphic graphs $\Gamma_1\ncong\Gamma_2$ are called \textit{synonyms} if $\phi(\Gamma_1)=c\cdot \phi(\Gamma_2)$ with $c\in \mathbb{R}\setminus\{0\}$, that is, the two graphs provide the same multivector up to a nonzero constant.  
\end{define}
 We have already seen many synonyms, for example within the three and four dimensional Hamiltonians (equations \eqref{ham 3d}, \eqref{ham 4d}), and the two-dimensional vector fields\footnote{We also have synonyms of vector fields in dimensions $3$ and $4$, see the attached code, but no explicit examples can be given in this text due to volume constraints.} (equation \eqref{rel 2d}). We do not yet understand these synonyms. Two graphs that evaluate to the same multivector in one dimension might not exhibit the same properties in a higher dimension. One of the most clear examples of this is the behaviour of pairs of graphs that give the two-dimensional solution $\vec{X}^{\gamma_3}_{2D}$. Using the relations of equation \eqref{rel 2d}, we can create $28$ pairs in two dimensions such that each pair solves the trivialization problem. But, when we move to dimension three we detect that there exists a solution over the descendants of only $5$ of these $28$ two-dimensional pairs (see the attached code), see table \ref{table 1}.   
\begin{table}[H]\centering
\caption{Does a trivializing vector field exist over the three-dimensional descendants of the trivializing pair $(\Gamma_i^{2D})_{3D},(\Gamma_{j}^{2D})_{3D}$ where $i\in \{2,4,9,11\}$ and $j\in \{1,5,6,7,8,12,13\}$ ?}\label{table 1}
\begin{tabular}{c|c|c|c|c|c|c|c}
\thickhline
  & $(\widehat{\Gamma}_1^{2D})_{3D}$  & $(\widehat{\Gamma}_5^{2D})_{3D}$ & $(\widehat{\Gamma}_6^{2D})_{3D}$ & $(\widehat{\Gamma}_7^{2D})_{3D}$ & $(\widehat{\Gamma}_8^{2D})_{3D}$ & $(\widehat{\Gamma}_{12}^{2D})_{3D}$ & $(\widehat{\Gamma}_{13}^{2D})_{3D}$\\  
\hline
  $(\widehat{\Gamma}_2^{2D})_{3D}$   &  No & No & No & No & \textbf{Yes} & \textbf{Yes} & No \\
   $(\widehat{\Gamma}_4^{2D})_{3D}$  &  No & No & No & No & No & No & No \\
   $(\widehat{\Gamma}_9^{2D})_{3D}$  &  No & No & No & No & No & No & No\\
   $(\widehat{\Gamma}_{11}^{2D})_{3D}$ &  No & No & No & \textbf{Yes}  & \textbf{Yes}& \textbf{Yes} & No\\
\thickhline
\end{tabular}
\end{table}
 Similarly, we can take these $5$ `yes'-pairs over which we find a solution in dimension $3$, and consider their four-dimensional descendants. In this case, we can only find a solution over the descendants of two of these pairs, see table \ref{table 2} (see the attached code).
\begin{table}[H]\centering
\caption{Does a trivializing vector field exist over the four-dimensional descendants of the trivializing pair $(\Gamma_i^{2D})_{4D},(\Gamma_{j}^{2D})_{4D}$ where $i\in \{2,4,9,11\}$ and $j\in \{1,5,6,7,8,12,13\}$?\label{table 2}}
\begin{tabular}{c|c|c|c|c|c|c|c}
\thickhline
  & $(\widehat{\Gamma}_1^{2D})_{4D}$  & $(\widehat{\Gamma}_5^{2D})_{4D}$ & $(\widehat{\Gamma}_6^{2D})_{4D}$ & $(\widehat{\Gamma}_7^{2D})_{4D}$ & $(\widehat{\Gamma}_8^{2D})_{4D}$ & $(\widehat{\Gamma}_{12}^{2D})_{4D}$ & $(\widehat{\Gamma}_{13}^{2D})_{4D}$\\  
\hline
  $(\widehat{\Gamma}_2^{2D})_{4D}$   &  No & No & No & No & \textbf{No} & \textbf{Yes} & No \\
   $(\widehat{\Gamma}_4^{2D})_{4D}$  &  No & No & No & No & No & No & No \\
   $(\widehat{\Gamma}_9^{2D})_{4D}$  &  No & No & No & No & No & No & No\\
   $(\widehat{\Gamma}_{11}^{2D})_{4D}$ &  No & No & No & \textbf{No}  & \textbf{No}& \textbf{Yes} & No\\
\thickhline
\end{tabular}
\end{table}
\section{Conclusion}
    The appearance of synonyms in the trivialization problem makes it difficult to detect patterns in the graphs that show up in the solution (provided both a solution and a pattern exist at all!) and adds an extra barrier in guessing what graphs may appear in the trivializing vector field for a particular dimension.  
\begin{rem}
    We cannot compute in dimension $d\geqslant 5$ because of the time complexity of the system that needs to be solved. Moreover, there is no guarantee that we can find \textit{any} solution over the five-dimensional descendants of the pairs $\Gamma^{2D}_{2}$, $\Gamma^{2D}_{12}$ and $\Gamma^{2D}_{11}$, $\Gamma^{2D}_{12}$, see table \ref{table 2}. Indeed, there is no reason for us to consider \textit{just} the pairs of graphs. We might need to consider linear combinations of more than two graphs as they can still solve the trivialization problem.
\end{rem}

\section*{Acknowledgements}
The authors are grateful to the organizers of the International conference on Integrable Systems and Quantum Symmetries (ISQS 28) for an opportunity to present and discuss new results. The authors thank the Center for Information Technology of the University of Groningen for their support and for providing access to the Hábrók high performance computing cluster. The authors thank the University of Groningen for partial financial support. Lastly, the authors thank R.~Buring for the \textsf{gcaops} software and his instructions on how to work with it. 

\bibliography{references}

\end{document}